\documentclass[a4paper]{article}
\usepackage{amsmath,amssymb}
\usepackage[utf8]{inputenc}
\usepackage{tikz}
\usepackage[english, frenchb]{babel}
\usepackage[french,colorlinks,citecolor=blue]{hyperref}
\usetikzlibrary{arrows,trees}

\newtheorem{theorem}{Théorème}[section]

\newtheorem{lemma}[theorem]{Lemme}

\numberwithin{equation}{section}

\newcommand{\ZZ}{\mathbb{Z}}
\newcommand{\NN}{\mathbb{N}}
\newcommand{\QQ}{\mathbb{Q}}





\newcommand{\qbinom}[2]{\genfrac{[}{]}{0mm}{1}{#1}{#2}}
\newcommand{\qbase}[3]{\genfrac{[}{]}{0mm}{1}{#1,\,#2}{#3}}

\newcommand{\asc}{\operatorname{Asc}}
\newcommand{\desc}{\operatorname{Desc}}

\newcommand*\pFq[4]{{}_{#1}\phi_{#2}\biggl(\genfrac..{0pt}{}{#3}{#4};q,q\biggr)}

\newenvironment{proof}{\begin{trivlist}\item{\bf{Preuve.}}}
  {\hfill\rule{2mm}{2mm}\end{trivlist}}

\title{Nombres de $q$-Bernoulli-Carlitz et fractions continues}
\author{F. Chapoton et J. Zeng}
\date{\today}
\setlength{\topmargin}{0cm}
\setlength{\headheight}{0cm}
\setlength{\headsep}{0cm}
\setlength{\textwidth}{16.5cm}
\addtolength{\textheight}{2cm}
\setlength{\oddsidemargin}{0cm}

\begin{document}

\maketitle
\selectlanguage{english}
\begin{abstract}
Carlitz has introduced $q$-analogues of the Bernoulli numbers around
1950. We obtain a representation of these $q$-Bernoulli numbers (and
some shifted version) as moments of some orthogonal polynomials. This
also gives factorisations of Hankel determinants of $q$-Bernoulli
numbers, and continued fractions for their generating series. Some of
these results are $q$-analogues of known results for Bernoulli numbers,
but some are specific to the $q$-Bernoulli setting.
\end{abstract}
\selectlanguage{frenchb}
\begin{abstract}
Carlitz a introduit vers 1950 des $q$-analogues des nombres de
Bernoulli. On obtient une représentation de ces $q$-analogues (ainsi
que de variantes décalées) comme moments de certains polynomes
orthogonaux. Ceci donne aussi des factorisations des déterminants de
Hankel des nombres de $q$-Bernoulli, ainsi que des fractions continues
pour leurs séries génératrices. Certains de ces résultats sont des $q$-analogues d'énoncés
connus pour les nombres de Bernoulli, mais d'autres sont sans version classique.
\end{abstract}

\section*{Introduction}

Les nombres de Bernoulli ont une longue histoire, et sont utiles dans
divers domaines des mathématiques. Leur apparition dans les valeurs
prises aux entiers par la fonction $\zeta$ de Riemann est sans doute
une des principales raisons de leur importance.

Un aspect apparemment assez peu connu est l'existence de plusieurs
fractions continues simples pour des séries génératrices faisant
intervenir des nombres de Bernoulli ou des polynômes de Bernoulli. On
trouve déjà un exemple de ce type pour les polynômes de Bernoulli dans
les travaux fondateurs de Stieltjes sur les fractions continues
\cite[\S 86]{stieltjes_toulouse2}. D'autres exemples pour les nombres
de Bernoulli sont présentés dans l'appendice par Zagier du livre
\cite{zagier}. Sans aucune exhaustivité, on trouve notamment de telles
fractions continues dans les articles \cite[\S 6]{rogers}, \cite[\S
5]{frame_hankel} et \cite[\S 13 et 14]{touchard}.

Le contexte naturel pour ces fractions continues (du moins pour celles
qui font intervenir des séries génératrices ordinaires) est la théorie
des polynômes orthogonaux. Les nombres ou polynômes de Bernoulli
apparaissent dans ce cadre comme les moments de familles de polynômes
orthogonaux en une variable. On peut citer notamment Carlitz
\cite{carlitz_ortho} pour une interprétation en ces termes des
résultats de Stieltjes pour les polynômes de Bernoulli. Ces travaux
sont brièvement décrits dans \cite[p. 191-192]{chihara}.

Une conséquence de cette réalisation comme moments de polynômes
orthogonaux est l'existence de formules closes pour les déterminants
de Hankel des nombres de Bernoulli. En particulier, de telles formules
sont démontrées dans \cite{alsalam_carlitz, kratt_det} pour les
déterminants de Hankel des nombres de Bernoulli avec indices décalés
de $0,1$ ou $2$. Un résultat plus général, qui contient ces trois cas, a
également été obtenu par Fulmek et Krattenthaler dans
\cite{kratt_det}.

Notre objectif dans cet article est d'obtenir des résultats similaires
(fractions continues pour les séries génératrices, factorisations des
déterminants de Hankel, expressions comme moments de polynômes
orthogonaux) pour les $q$-analogues des nombres de Bernoulli et des
polynômes de Bernoulli introduits par Carlitz dans
\cite{carlitz_qbern}.

Ces nombres de $q$-Bernoulli-Carlitz, qui sont des fractions
rationnelles en la variable $q$, semblent assez naturels. Ils sont
apparus récemment dans l'étude de certaines séries formelles en
arbres \cite{chap-tree}, ainsi que dans la théorie des $q$-polynômes d'Ehrhart \cite{chap-essouabri}, et sont
fortement liés avec un $q$-analogue de la fonction $\zeta$ \cite{chap-zeta}. Les
résultats du présent article sont un autre signe de leur intérêt potentiel.

L'article est organisé comme suit. Après un section d'introduction
des notations, on obtient d'abord des expressions des nombres de
$q$-Bernoulli-Carlitz (éventuellement décalés) comme moments de
polynômes orthogonaux de type $q$-Hahn ou $q$-Legendre. On formule
ensuite les récurrences explicites pour ces polynômes orthogonaux. Par
la théorie générale, ceci donne des fractions continues de Jacobi pour
les séries génératrices des moments et des factorisations de
déterminants de Hankel des moments. On transforme ensuite ces fractions
continues en fractions continues de Stieltjes. Enfin, on obtient dans
la dernière section un résultat plus général, qui est un $q$-analogue
du théorème de Fulmek et Krattenthaler. Ceci fait intervenir les
polynômes orthogonaux de type $q$-Jacobi.

Dans la plupart de nos résultats, on peut faire $q=1$ et retrouver de
manière transparente les résultats classiques. Il y a deux exceptions:
les fractions continues pour la série génératrice décalée de deux
crans, et la formule pour le déterminant de Hankel décalé de trois
crans, qui n'ont pas de limite classique et sont donc des formules
complètement nouvelles.

Le lecteur intéressé pourra trouver beaucoup d'informations
historiques sur le cas classique dans l'introduction de l'article
\cite{koelink}. Un $q$-analogue de la série génératrice exponentielle
des nombres de Bernoulli a été étudié sous un angle similaire dans
\cite{andrews}.

Remerciements: les auteurs remercient le NIMS (Daejeon) où cet article
a été terminé, pour l'accueil et les agréables conditions de travail. Le
premier auteur bénéficie du soutien du contrat ANR CARMA (ANR-12-BS01-0017).

\section{Notations}

Les nombres de $q$-Bernoulli-Carlitz, notés $\beta_n$ pour
$n\geq 0$, sont définis par les relations
\begin{equation}
  \label{defbeta}
  q (q \beta + 1)^n - \beta^n = \begin{cases}
q-1 \quad &\text{si}\,\,n=0,\\
1 \quad &\text{si}\,\,n=1,\\
0 \quad &\text{si}\,\,n>1,
\end{cases}
\end{equation}
où on convient de transformer les exposants de $\beta$ en indices
après avoir développé le binôme.

Ce sont des fractions rationnelles en la variable $q$, dont les valeurs
en $q=1$ sont les nombres de Bernoulli usuels. Pour illustration, voici les premières fractions:

\begin{align*}
\beta_0 &= 1,\quad
\beta_1 = \frac{-1}{q + 1},\quad
\beta_2 = \frac{q}{(q + 1) \cdot (q^{2} + q + 1)},\\
\beta_3 &= \frac{-q \cdot (q-1)}{(q + 1) \cdot (q^{2} + q + 1) \cdot (q^{2} + 1) },\\
\beta_4 &= \frac{q \cdot (q^{4} - q^{3} - 2q^{2} - q + 1)}{(q + 1) \cdot (q^{2} + q + 1) \cdot (q^{2} + 1) \cdot (q^{4} + q^{3} + q^{2} + q + 1) }.
\end{align*}

Leur série génératrice exponentielle est notée
\begin{equation}
  B(x) = \sum_{n \geq 0} \beta_n \frac{x^n}{n!}.
\end{equation}
Des relations \eqref{defbeta}, on déduit qu'elle satisfait l'équation fonctionnelle
\begin{equation}
  q e^x B(q x) - B(x) = q - 1 + x.
\end{equation}

Leur série génératrice ordinaire est notée
\begin{equation}
  \widehat{B}(x) = \sum_{n \geq 0} \beta_n x^n.
\end{equation}
On déduit des relations \eqref{defbeta} l'équation fonctionnelle
\begin{equation}
  \frac{q}{1-x}\widehat{B}\left(\frac{q x}{1-x}\right) - \widehat{B}(x) = q - 1 + x.
\end{equation}

On considère aussi les séries génératrices ordinaires décalées de un
ou deux crans définies par
\begin{equation}
  \widehat{B}_1(x) = \frac{1}{\beta_1}\sum_{n \geq 0} \beta_{n+1} x^n
\quad\text{et}\quad
  \widehat{B}_2(x) = \frac{1}{\beta_2}\sum_{n \geq 0} \beta_{n+2} x^n.
\end{equation}

On note $\Psi$ la forme linéaire sur les polynômes en une variable
$x$ à coefficients dans $\QQ(q)$ définie par
\begin{equation}
  \Psi(x^n) = \beta_n
\end{equation}
pour $n \geq 0$.

Les polynômes de $q$-Bernoulli-Carlitz sont les polynômes en une variable
$z$ à coefficients dans $\QQ(q)$ définis pour $n \geq 0$ par
\begin{equation}
  \label{poly_qbc}
  \beta_n(z) = \Psi((z + (z(q-1)+1)x)^n),
\end{equation}
et leur valeur en $z=0$ est le nombre de $q$-Bernoulli-Carlitz
$\beta_n$. Ce sont des $q$-analogues des polynômes de Bernoulli
usuels. Les trois premiers sont
\begin{equation*}
  1, \frac{2z-1}{q+1}, \frac{3 (q + 1) z^2 - 2(2 q + 1) z + q}{(q + 1)(q^2 + q + 1)}.
\end{equation*}
Ces polynômes de $q$-Bernoulli-Carlitz sont reliés aux polynômes
introduits initialement par Carlitz dans \cite{carlitz_qbern} par un
changement de variable simple. Plus précisément, si on note
$\hat{\beta}_n$ les polynômes originaux de Carlitz, il résulte de la
comparaison entre la formule (5.5) de cette référence et
\eqref{poly_qbc} que
\begin{equation}
  \label{q_poly_relation}
  \beta_n\left((q^y-1)/(q-1)\right) = \widetilde{\beta}_n(y).
\end{equation}

Pour $n\in \ZZ$, on note $[n]_q$ le $q$-entier $(q^n-1)/(q-1)$. Pour
$n\in \NN$, on note $[n]!_q$ la $q$-factorielle $[1]_q [2]_q \dots
[n]_q$. Pour $0 \leq m \leq n$ dans $\NN$, on note $\qbinom{n}{m}_q$ le
$q$-analogue habituel des coefficients binomiaux, défini par
\begin{equation}
  \frac{[n]!}{[m]!_q[n-m]!_q}.
\end{equation}

Pour des entiers positifs ou nuls $i, d$, on pose
\begin{equation}
  \label{q_bino_dd}
  \qbase{i}{x}{d}_q = \frac{1}{[d]!_q} ([i-d+1]_q+q^{i-d+1} x)([i-d+2]_q+q^{i-d+2} x)\dots([i]_q+q^i x).
\end{equation}
C'est un $q$-analogue du polynôme binomial $\binom{i+x}{d}$.

On a alors les évaluations suivantes (voir \cite[Prop. 3.3 et 3.5]{chap-essouabri}).
\begin{lemma}
  \label{beta}
  Pour des entiers $0 \leq i \leq d$, on a
  \begin{equation}
    \Psi(\qbase{i}{x}{d}_q) = \frac{(-1)^{d-i}q^{-\binom{d-i}{2}}}{[d+1]_q \qbinom{d}{i}_q}.
  \end{equation}
\end{lemma}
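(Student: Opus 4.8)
The plan is to bypass any induction on $i$ or $d$ and instead reduce the computation to a single functional equation for $\Psi$ together with the $q$-binomial theorem. First I would record the master identity obeyed by $\Psi$: expanding an arbitrary $P=\sum_n a_n x^n\in\QQ(q)[x]$ and using the defining relations~\eqref{defbeta} together with the linearity of $\Psi$, one gets
\begin{equation*}
q\,\Psi\!\left(P(qx+1)\right)-\Psi\!\left(P(x)\right)=(q-1)\,P(0)+P'(0)
\end{equation*}
for every polynomial $P$. Applying this to $P(x)=u^k$, where $u:=1+(q-1)x$, is especially clean: the substitution $x\mapsto qx+1$ sends $u$ to $qu$, while $P(0)=1$ and $P'(0)=k(q-1)$, so $(q^{k+1}-1)\,\Psi(u^k)=(k+1)(q-1)$, that is $\Psi(u^k)=(k+1)/[k+1]_q$.

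Next I would rewrite $\qbase{i}{x}{d}_q$ in the variable $u$. Since $[j]_q+q^j x=(q^j u-1)/(q-1)$, formula~\eqref{q_bino_dd} becomes $\qbase{i}{x}{d}_q=\frac{1}{[d]!_q(q-1)^d}\prod_{j=i-d+1}^{i}(q^j u-1)$. Expanding this product in powers of $u$ by the $q$-binomial theorem $\prod_{l=0}^{d-1}(1+q^l y)=\sum_{k=0}^d q^{\binom{k}{2}}\qbinom{d}{k}_q y^k$ with $y=-q^{\,i-d+1}u$, then applying $\Psi$ term by term with the value $\Psi(u^k)$ just found, then using $\qbinom{d}{k}_q/[k+1]_q=\qbinom{d+1}{k+1}_q/[d+1]_q$ and reindexing by $l=k+1$ (and simplifying exponents via $\binom{l-1}{2}=\binom{l}{2}-(l-1)$), I expect to arrive at
\begin{equation*}
\Psi\!\left(\qbase{i}{x}{d}_q\right)=\frac{(-1)^d}{[d+1]!_q\,(q-1)^d}\sum_{l=1}^{d+1}(-1)^{l-1}\,l\,q^{\binom{l}{2}+(l-1)(i-d)}\qbinom{d+1}{l}_q.
\end{equation*}

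It then remains to evaluate this sum in closed form. The key observation is that it equals $-F'(q^{\,i-d})$, where $F(t):=\prod_{j=0}^{d}(1-q^j t)=\sum_{l=0}^{d+1}(-1)^l q^{\binom{l}{2}}\qbinom{d+1}{l}_q t^l$ is the $q$-binomial generating polynomial. Because $0\le i\le d$, at $t=q^{\,i-d}$ the product $F(t)$ has exactly one vanishing factor, the one with $j=d-i$; hence in the Leibniz expansion of $F'$ only the corresponding term survives, giving $F'(q^{\,i-d})=-q^{\,d-i}\prod_{j\neq d-i}(1-q^{\,j+i-d})$, the product being over $j\in\{0,\dots,d\}\setminus\{d-i\}$, i.e.\ over $q$-powers whose exponents run through $\{i-d,\dots,i\}\setminus\{0\}$. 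Splitting this product into its positive-exponent part, equal to $(-1)^i(q-1)^i[i]!_q$, and its negative-exponent part, which via $1-q^{-s}=(q^s-1)/q^s$ equals $(q-1)^{d-i}[d-i]!_q/q^{\binom{d-i+1}{2}}$, and using $d-i-\binom{d-i+1}{2}=-\binom{d-i}{2}$, one finds that the sum equals $(-1)^i(q-1)^d q^{-\binom{d-i}{2}}[i]!_q[d-i]!_q$. Substituting back, the powers of $(q-1)$ cancel, $(-1)^{d+i}=(-1)^{d-i}$, and $[i]!_q[d-i]!_q/[d+1]!_q=1/([d+1]_q\qbinom{d}{i}_q)$, yielding exactly the asserted identity.

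The manipulations with the $q$-binomial theorem and the functional equation for $\Psi$ are routine; the one point requiring care is the final evaluation of $F'(q^{\,i-d})$, namely recognizing that the product over the window $\{i-d,\dots,i\}\setminus\{0\}$ straddles $0$ and therefore splits into a genuine $q$-factorial factor and a factor of negative-index $q$-integers, the latter being precisely what produces the sign $(-1)^{d-i}$ and the power $q^{-\binom{d-i}{2}}$ in the statement. This route also makes clear why no separate base case or induction is needed.
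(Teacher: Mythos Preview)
Your argument is correct in every step; I checked the functional equation, the evaluation $\Psi\big((1+(q-1)x)^k\big)=(k+1)/[k+1]_q$, the expansion of $\qbase{i}{x}{d}_q$ via the $q$-binomial theorem, the identification of the resulting sum with $-F'(q^{\,i-d})$ for $F(t)=\prod_{j=0}^d(1-q^j t)$, and the splitting of the product $\prod_{j\neq d-i}(1-q^{j+i-d})$ into its positive- and negative-exponent parts (including the exponent identity $d-i-\binom{d-i+1}{2}=-\binom{d-i}{2}$).

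As for comparison with the paper: there is nothing to compare, since the paper does not prove this lemma at all but simply quotes it from \cite[Prop.~3.3 et 3.5]{chap-essouabri}. Your proof is therefore a self-contained replacement for that citation. The derivative trick is the decisive step: because $q^{\,i-d}$ is a simple root of $F$ for $0\le i\le d$, only one term of the Leibniz expansion of $F'$ survives, which instantly collapses the alternating $q$-binomial sum to a closed product and makes any induction on $i$ or $d$ unnecessary.
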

\begin{lemma}
  \label{alpha}
  Pour des entiers $0 \leq i \leq d$ et $0 \leq j \leq e$, on a
  \begin{equation}
    \Psi(\qbase{i}{x}{d}_q\qbase{j}{x}{e}_q) = \frac{(-1)^{d-i+e-j}q^{-\binom{d-i}{2}+(d-i)(e-j)-\binom{e-j}{2}}}{[d+e+1]_q \qbinom{d+e}{d-i+j}_q}.
  \end{equation}
\end{lemma}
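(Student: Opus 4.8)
The plan is to reduce the statement to Lemma~\ref{beta} by expanding the product $\qbase{i}{x}{d}_q\,\qbase{j}{x}{e}_q$ on a convenient basis and recognising the sum that comes out as an evaluable $q$-series. We start from the elementary identity $[k]_q+q^k[y]_q=[k+y]_q$: under the substitution $x=[y]_q=(q^y-1)/(q-1)$ each linear factor of \eqref{q_bino_dd} becomes $[k+y]_q$, so that $\qbase{i}{x}{d}_q$ is identified with the ordinary $q$-binomial coefficient $\qbinom{i+y}{d}_q$, viewed as a polynomial in $q^y$, equivalently in $x$. In particular the polynomials $\qbase{k}{x}{d+e}_q$, $0\le k\le d+e$, form a basis of the space of polynomials of degree $\le d+e$ in $x$ (the $\qbinom{k+y}{d+e}_q$ do, by the usual telescoping argument), so we may write
\begin{equation*}
  \qbase{i}{x}{d}_q\,\qbase{j}{x}{e}_q \;=\; \sum_{k=0}^{d+e}\, c_k\,\qbase{k}{x}{d+e}_q ,
\end{equation*}
where the coefficients $c_k\in\QQ(q)$ are explicit: they arise from $q$-Chu--Vandermonde convolutions applied to $\qbinom{i+y}{d}_q$ and to $\qbinom{j+y}{e}_q$, and are (short combinations of) $q$-binomial coefficients times powers of $q$.

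Next we apply $\Psi$ term by term and use Lemma~\ref{beta} to replace each $\Psi(\qbase{k}{x}{d+e}_q)$ by $\dfrac{(-1)^{d+e-k}q^{-\binom{d+e-k}{2}}}{[d+e+1]_q\,\qbinom{d+e}{k}_q}$. After factoring out $1/[d+e+1]_q$ this leaves a single terminating sum over $k$ whose summand is a power of $q$ times a ratio of $q$-binomial coefficients; this should be a terminating, balanced $q$-hypergeometric series, summed in closed form by a $q$-Vandermonde (or $q$-Saalsch\"utz) evaluation, and the simplification must then reproduce exactly the asserted right-hand side. A useful check is $q=1$, which collapses the statement to the classical identity $\Psi\!\big(\binom{i+x}{d}\binom{j+x}{e}\big)=\dfrac{(-1)^{d-i+e-j}}{(d+e+1)\binom{d+e}{d-i+j}}$, which the same argument proves (expansion on $\binom{k+x}{d+e}$, then Vandermonde's convolution).

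The delicate point is exactly this last step: identifying the sum with a summable $q$-series and controlling all the powers of $q$, since the exponent $-\binom{d-i}{2}+(d-i)(e-j)-\binom{e-j}{2}$ in the answer is assembled from two independent sources --- the powers of $q$ hidden in the Chu--Vandermonde coefficients $c_k$ and the factor $q^{-\binom{d+e-k}{2}}$ coming from Lemma~\ref{beta}. A fully recursive alternative avoids the convolution altogether: induct on $e$, the base case $e=0$ being Lemma~\ref{beta}. For $j\ge1$ one uses $[e]_q\,\qbase{j}{x}{e}_q=([j]_q+q^jx)\,\qbase{j-1}{x}{e-1}_q$, for $j=0$ one uses instead $[e]_q\,\qbase{0}{x}{e}_q=([1-e]_q+q^{1-e}x)\,\qbase{0}{x}{e-1}_q$, and in either case one substitutes $x\,\qbase{i}{x}{d}_q=[d-i]_q\,\qbase{i+1}{x}{d+1}_q+q^{d-i}[i+1]_q\,\qbase{i}{x}{d+1}_q$ (a consequence of the Pascal-type relations $\qbase{i}{x}{d}_q=\qbase{i+1}{x}{d+1}_q-q^{d+1}\qbase{i}{x}{d+1}_q$ and $([i-d]_q+q^{i-d}x)\,\qbase{i}{x}{d}_q=[d+1]_q\,\qbase{i}{x}{d+1}_q$). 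This reduces $\Psi(\qbase{i}{x}{d}_q\,\qbase{j}{x}{e}_q)$ to three values of the same shape with $e$ replaced by $e-1$, so that the induction goes through once one checks that the three closed forms supplied by the induction hypothesis do add up to the claimed one --- a routine but lengthy identity between rational functions of $q$.
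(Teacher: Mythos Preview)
The paper does not actually prove Lemma~\ref{alpha}: the two evaluation lemmas are simply quoted from \cite[Prop.~3.3 et~3.5]{chap-essouabri}, so there is no argument here to compare yours against.

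On its own merits, your proposal sketches two reasonable strategies but completes neither. In the first approach you correctly observe that the substitution $x=[y]_q$ turns $\qbase{i}{x}{d}_q$ into $\qbinom{i+y}{d}_q$, and the auxiliary identities you record for the second approach (the Pascal relation $\qbase{i}{x}{d}_q=\qbase{i+1}{x}{d+1}_q-q^{d+1}\qbase{i}{x}{d+1}_q$ and the expansion of $x\,\qbase{i}{x}{d}_q$) are correct. But in both cases the essential computation is left undone. In the hypergeometric route you never write down the coefficients $c_k$, never identify the resulting sum as a specific ${}_2\phi_1$ or ${}_3\phi_2$, and never check that the $q$-exponents assemble into $-\binom{d-i}{2}+(d-i)(e-j)-\binom{e-j}{2}$; saying the sum ``should be'' summable is not a proof. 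In the inductive route you reduce to a three-term identity among the closed forms and then declare it ``routine but lengthy'' without performing it; since this identity is precisely where the nontrivial $q$-power in the answer must emerge, it is the heart of the matter, not a formality.

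So the strategies are sound and would likely succeed, but as written this is a plan rather than a proof: the decisive step is missing in each branch.
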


On introduit les notations
\begin{equation}
  \label{def_asc}
  \asc(x, 0) = 1, \quad  \asc(x, a) = \prod_{i=1}^{a} ([i]_q + q^i x)
\end{equation}
et
\begin{equation}
  \label{def_desc}
  \desc(x,-1) = -1/x, \quad \desc(x, 0) = 1, \quad  \desc(x, a) = \prod_{i=1}^{a}([i]_q-x).
\end{equation}
Remarque : pour $n$ entier positif, $[-n]_q+q^{-n} x = - q^{-n} ([n]_q - x)$.

On utilise le symbole de Pochhammer de base $q$ défini par
\begin{equation}
  (a ; q)_ k = (1-a) (1-q a)\dots(1-q^{k-1} a),
\end{equation}
ainsi que la notation abrégée $(a,b,\dots;q)_k$ pour le produit de plusieurs tels symboles.

On a besoin du lemme suivant (voir \cite[p. 25]{chihara}).
\begin{lemma}
  \label{affine}
  Soit $(q_n(x))_{n \geq 0}$ une famille de polynômes orthogonaux définis par
  \begin{equation}
    q_{n+1}(x) = (a_n + x) q_{n}(x) - b_n q_{n-1}(x),
  \end{equation}
  avec les conditions initiales $q_{-1}(x)=0$ et $q_{0}(x)=1$. Soit
  $(p_n(x))_{n \geq 0}$ les polynômes définis par le changement de
  variables $p_n(x) = q_n(A x+B)/A^n$ $(A\not= 0)$. Alors les $p_n$
  sont une famille de polynômes orthogonaux vérifiant la récurrence
  \begin{equation}
    p_{n}(x) = ((a_n + B)/A + x) p_{n}(x) - b_n/A^2 p_{n-1}(x).
  \end{equation}
  Si on note $\nu_n$ les moments de la famille $q_n$, alors les
  moments de la famille $p_n$ sont
  \begin{equation}
    \mu_n = A^{-n} \sum_{k=0}^{n} \binom{n}{k} (-B)^{n-k} \nu_k.
  \end{equation}
\end{lemma}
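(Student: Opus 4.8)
The plan is to treat the two assertions separately: first the three-term recurrence for the $p_n$, which is a direct substitution, and then the formula for the moments, which follows by transporting the orthogonality functional through the affine change of variable.

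For the recurrence, I would substitute $x \mapsto Ax+B$ in the relation defining the $q_n$ and then divide by $A^{n+1}$. Since $p_n(x) = q_n(Ax+B)/A^n$, the left-hand side becomes $p_{n+1}(x)$, while on the right $q_n(Ax+B)/A^{n+1} = p_n(x)/A$ gets multiplied by $a_n + Ax + B$, and $q_{n-1}(Ax+B)/A^{n+1} = p_{n-1}(x)/A^2$ gets multiplied by $b_n$. Regrouping the coefficient of $p_n(x)$ gives exactly $p_{n+1}(x) = ((a_n+B)/A + x)\,p_n(x) - (b_n/A^2)\,p_{n-1}(x)$, and the initial conditions $p_{-1}=0$, $p_0=1$ are inherited from those of the $q_n$ because $A \neq 0$. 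As long as the coefficients $b_n$ are nonzero (so that the $q_n$ are a genuine orthogonal family), the transformed coefficients $b_n/A^2$ are nonzero as well, so by Favard's theorem the $p_n$ again form an orthogonal family.

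For the moments, let $L$ denote the linear functional on $\QQ(q)[x]$ (normalized so that $L(x^n) = \nu_n$) with respect to which the $q_n$ are orthogonal, and define a new functional $M$ by $M(f) = L\big(f((x-B)/A)\big)$. The key observation is that $p_n((x-B)/A) = q_n\big(A\cdot(x-B)/A + B\big)/A^n = q_n(x)/A^n$, so $M(p_n p_m) = A^{-n-m}\,L(q_n q_m)$, which vanishes for $n \neq m$; hence $M$ is, up to normalization, the orthogonality functional of the $p_n$, and its moments are the numbers $\mu_n$ we are after. It then remains only to expand $\mu_n = M(x^n) = L\big(((x-B)/A)^n\big) = A^{-n}\sum_{k=0}^n \binom{n}{k}(-B)^{n-k}\,L(x^k)$ and to substitute $L(x^k) = \nu_k$.

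The computation is entirely routine; the only point that deserves a word is the normalization of the functionals, since an orthogonality functional is determined only up to an overall scalar. One fixes the convention $M(1) = L(1)$, that is $\mu_0 = \nu_0$, which is precisely the $n=0$ instance of the claimed identity and is automatically compatible with the definition $M(f) = L(f((x-B)/A))$. With that convention there is no real obstacle.
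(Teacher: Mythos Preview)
Your argument is correct: the substitution $x\mapsto Ax+B$ followed by division by $A^{n+1}$ gives the transformed recurrence, and transporting the orthogonality functional via $M(f)=L\big(f((x-B)/A)\big)$ yields the moment formula by a binomial expansion. The paper does not actually give a proof of this lemma; it merely cites \cite[p.~25]{chihara}, so your elementary verification supplies what the paper omits.
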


\section{Polynômes orthogonaux et moments}

\subsection{Polynômes orthogonaux de type $q$-Hahn}

On considère les polynômes en $x$ définis par la formule
\begin{equation}
  \label{hypergeo_cd}
  P_n(x) = \pFq{3}{2}{q^{-n}, q^{c + d + n + 1},
    q (1 + (q - 1) x)}{q^{c + 1}, q^{d + 1}}
\end{equation}
où ${}_3\phi_{2}$ est la fonction hypergéométrique basique usuelle. Les
paramètres $c$ et $d$ sont des entiers positifs ou nuls.

Ces polynômes sont l'évaluation en $q (1 + (q - 1) x)$ de polynômes
$Q_n$ de type $q$-Hahn, qui forment une famille classique de polynômes
orthogonaux. Ce sont encore des polynômes orthogonaux (voir le lemme
\ref{affine}).




\begin{theorem}
  Les nombres de $q$-Bernoulli-Carlitz $(\beta_n)_{n\geq 0}$ sont les
  moments des polynômes orthogonaux $P_n$ de paramètres $c=d=0$.
\end{theorem}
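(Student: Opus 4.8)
The plan is to exploit the general principle (see \cite[Ch.~I]{chihara}) that an orthogonal polynomial sequence $(P_n)$ has, up to a scalar, a single orthogonalizing linear form $L$, and that among linear forms with $L(1)\neq 0$ it is characterized by the vanishing conditions $L(P_n)=0$ for all $n\geq 1$: indeed, expanding an arbitrary polynomial in the basis $(P_k)_{k\geq 0}$ shows that such an $L$ is completely determined by its value at $1$. Since the $P_n$ with $c=d=0$ are orthogonal polynomials (by \eqref{hypergeo_cd} and Lemma~\ref{affine}) and since $\Psi(1)=\beta_0=1$, it therefore suffices to prove that $\Psi(P_n)=0$ for every $n\geq 1$; the moments of $(P_n)$ will then be exactly $\Psi(x^n)=\beta_n$.

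The first step is to recognize the hypergeometric argument in \eqref{hypergeo_cd} as an ascending $q$-factorial. The $m$-th factor of $(q(1+(q-1)x);q)_k$ is $1-q^m(1+(q-1)x)$ for $m=1,\dots,k$, and from $1-q^m=-(q-1)[m]_q$ one gets $1-q^m(1+(q-1)x)=(1-q)([m]_q+q^m x)$. Hence, using \eqref{def_asc} and then \eqref{q_bino_dd} with $i=d=k$,
\begin{equation*}
  (q(1+(q-1)x);q)_k=(1-q)^k\asc(x,k)=(1-q)^k\,[k]!_q\,\qbase{k}{x}{k}_q .
\end{equation*}
Substituting this into \eqref{hypergeo_cd} with $c=d=0$, and cancelling $[k]!_q=(q;q)_k/(1-q)^k$ against one of the $(q;q)_k$ in the denominator of the ${}_3\phi_2$, yields the terminating expansion
\begin{equation*}
  P_n(x)=\sum_{k=0}^{n}\frac{(q^{-n};q)_k\,(q^{n+1};q)_k}{(q;q)_k^{2}}\,q^{k}\,\qbase{k}{x}{k}_q .
\end{equation*}

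The second step is to apply $\Psi$ termwise. Lemma~\ref{beta} with $i=d=k$ gives $\Psi(\qbase{k}{x}{k}_q)=1/[k+1]_q$, and using $[k+1]_q=(1-q^{k+1})/(1-q)$ together with $(q;q)_{k+1}=(1-q)(q^{2};q)_k$ to simplify the coefficient, one obtains
\begin{equation*}
  \Psi(P_n)=\sum_{k=0}^{n}\frac{(q^{-n};q)_k\,(q^{n+1};q)_k}{(q;q)_k\,(q^{2};q)_k}\,q^{k}
  = {}_2\phi_1\bigl(q^{-n},q^{n+1};q^{2};q,q\bigr).
\end{equation*}
The $q$-Chu--Vandermonde summation ${}_2\phi_1(q^{-n},b;c;q,q)=b^{n}(c/b;q)_n/(c;q)_n$ with $b=q^{n+1}$ and $c=q^{2}$ gives $\Psi(P_n)=q^{n(n+1)}(q^{1-n};q)_n/(q^{2};q)_n$; for $n\geq 1$ the factor $1-q^{0}=0$ occurs in $(q^{1-n};q)_n$, so $\Psi(P_n)=0$, while for $n=0$ one has $\Psi(P_0)=\beta_0=1$, consistent with the normalization.

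The computation is essentially routine. The one point requiring care is the first step — identifying $(q(1+(q-1)x);q)_k$ with $(1-q)^k\asc(x,k)$, which hinges on the elementary relation $1-q^m=-(q-1)[m]_q$ — together with selecting the correct form of the $q$-Chu--Vandermonde identity (there are several) and checking that the chosen parameters force the numerator $q$-Pochhammer symbol to vanish. As an alternative to the closed evaluation of the ${}_2\phi_1$, one could instead verify directly that $\Psi\bigl(P_n\,\qbase{j}{x}{j}_q\bigr)=0$ for $j<n$ using Lemma~\ref{alpha}, but the route above appears shortest.
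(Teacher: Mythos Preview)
Your proof is correct and follows essentially the same route as the paper: identify $(q(1+(q-1)x);q)_k$ with $(q;q)_k\,\qbase{k}{x}{k}_q$, apply Lemma~\ref{beta} to reduce $\Psi(P_n)$ to a ${}_2\phi_1$, and conclude by $q$-Vandermonde. Your write-up is slightly more explicit about why vanishing on $P_n$ for $n\ge1$ characterizes the moment functional and about which factor of $(q^{1-n};q)_n$ kills the sum, but the argument is the same as the paper's.
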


\begin{proof}
  Il suffit de montrer que $\Psi$ s'annule sur les polynômes $P_n$
  pour ces paramètres lorsque $n\geq 1$ et vaut $\beta_0 = 1$ lorsque
  $n=0$. Ceci caractérise l'application moment pour cette famille de
  polynômes, voir par exemple la preuve du théorème de Favard dans
  \cite[p. 21-22]{chihara}.

  L'expression hypergéometrique \eqref{hypergeo_cd} donne la formule explicite
  \begin{equation}
    P_n(x) = \sum_{k=0}^{n} \frac{(q^{-n}, q^{n+1} ; q)_k (q (1 + (q - 1) x);q)_k q^k}{(q,q,q;q)_k}
  \end{equation}
  en termes de symboles de Pochhammer.

  Le symbole $(q (1 + (q - 1) x);q)_k$ vaut
  \begin{equation}
    (1-q)^{k} (1+ q x)([2]_q+q^2 x)([3]_q+q^3 x) \dots ([k]_q+q^k x),
  \end{equation}
  ce qu'on peut encore écrire, avec la notation introduite dans \eqref{q_bino_dd},
  \begin{equation}
    (1-q)^k [k]!_q \qbase{k}{x}{k}_q = (q ; q)_k \qbase{k}{x}{k}_q.
  \end{equation}
  Un cas particulier du lemme \ref{beta} donne que
  \begin{equation}
    \label{eval_psi0}
    \Psi(\qbase{k}{x}{k}_q)= \frac{1}{[k+1]_q}.
  \end{equation}
  Par conséquent,
  \begin{equation}
    \Psi(P_n(x)) = \sum_{k=0}^{n} \frac{(q^{-n}, q^{n+1} ; q)_k q^k}{(q,q;q)_k [k+1]_q} = \sum_{k=0}^{n} \frac{(q^{-n}, q^{n+1} ; q)_k q^k}{(q,q^2;q)_k}.
  \end{equation}
  On reconnaît $\pFq{2}{1}{q^{-n}, q^{n + 1}}{q^{2}}$, qui
  est nul par la formule de $q$-Vandermonde lorsque $n\geq 1$.
\end{proof}










\begin{theorem}
  Les fractions $(\beta_n / \beta_1)_{n\geq 1}$ sont les moments des
  polynômes orthogonaux $P_n$ de paramètres $c=0$ et $d=1$.
\end{theorem}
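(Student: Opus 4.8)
The plan is to mimic exactly the proof of the previous theorem, using the same criterion: it suffices to show that the linear form $\Psi$ (up to the normalisation factor $1/\beta_1$) annihilates the polynomials $P_n$ with parameters $c=0$, $d=1$ for $n\geq 1$ and gives the right value for $n=0$. More precisely, since we now want $(\beta_n/\beta_1)_{n\geq 1}$ to be the moments, the moment functional is $\Psi_1(x^n) = \beta_{n+1}/\beta_1 = \widehat B_1$-coefficients, i.e. we should work with the shifted linear form $\Psi_1$ defined by $\Psi_1(f(x)) = \Psi(x\cdot f(x))/\beta_1$. The Favard-type characterisation (same reference \cite[p.~21-22]{chihara}) then reduces the statement to checking $\Psi_1(P_n)=0$ for $n\geq 1$ with the $c=0$, $d=1$ polynomials, and $\Psi_1(P_0)=1$ which is immediate.

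First I would write out the explicit hypergeometric expansion of $P_n$ for $c=0$, $d=1$:
\begin{equation*}
  P_n(x) = \sum_{k=0}^{n} \frac{(q^{-n}, q^{n+2} ; q)_k \,(q (1 + (q - 1) x);q)_k \, q^k}{(q,q,q^{2};q)_k}.
\end{equation*}
As in the previous proof, $(q(1+(q-1)x);q)_k = (q;q)_k \qbase{k}{x}{k}_q$, so after cancelling $(q;q)_k$ the summand becomes $\frac{(q^{-n}, q^{n+2};q)_k q^k}{(q,q^2;q)_k}\qbase{k}{x}{k}_q$. Now the key computational input changes: instead of \eqref{eval_psi0} I need $\Psi_1(\qbase{k}{x}{k}_q) = \Psi\big(x\cdot\qbase{k}{x}{k}_q\big)/\beta_1$. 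The natural way to evaluate $\Psi(x\cdot\qbase{k}{x}{k}_q)$ is to rewrite $x\cdot\qbase{k}{x}{k}_q$ — or rather, to express $x$ (times the product over $i=1,\dots,k$ of $[i]_q+q^i x$) in terms of the basis $\qbase{\cdot}{x}{\cdot}_q$ appearing in Lemma~\ref{beta}. Since $[k+1]_q+q^{k+1}x = q^{k+1}(x - (-[k+1]_q)/q^{k+1}) $ and $q^i x = ([i]_q + q^i x) - [i]_q$, one can telescope: writing $q^{k+1}x$-type factors, one gets $x\,\asc(x,k)$ as a combination of $\asc(x,k)$ and $\asc(x,k+1)$, hence $x\,\qbase{k}{x}{k}_q$ as a combination of $\qbase{k}{x}{k}_q$ and $\qbase{k+1}{x}{k+1}_q$ (with the $[k+1]!_q$ vs $[k]!_q$ normalisation tracked). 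Applying \eqref{eval_psi0} to both terms then gives a closed rational expression for $\Psi_1(\qbase{k}{x}{k}_q)$, presumably of the shape $\frac{c\, q^{\alpha k + \beta}}{[k+1]_q[k+2]_q}$ or similar, with the $1/\beta_1 = -(q+1)$ factor absorbed.

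Substituting this back, $\Psi_1(P_n)$ becomes a single basic hypergeometric sum. I expect, after simplification of the Pochhammer symbols and the $[k+1]_q$, $[k+2]_q$ factors (using $[k+1]_q = (1-q^{k+1})/(1-q)$, which merges with $(q^2;q)_k$ or $(q^3;q)_k$ type products), that it collapses to a $_2\phi_1$ of the form $\pFq{2}{1}{q^{-n}, q^{n+2}}{q^{3}}$ or $\pFq{2}{1}{q^{-n},q^{n+1}}{q^{3}}$, which vanishes for $n\geq 1$ by the $q$-Vandermonde/Chu–Vandermonde summation (the numerator parameter $q^{-n}$ forcing a zero because one of the bottom parameters exceeds it appropriately). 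The case $n=0$ gives $P_0=1$ and $\Psi_1(1)=\Psi(x)/\beta_1 = \beta_1/\beta_1 = 1$, as required.

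The main obstacle I anticipate is the bookkeeping in computing $\Psi_1(\qbase{k}{x}{k}_q)$: one must correctly express $x\cdot\qbase{k}{x}{k}_q$ in the $\qbase{}{x}{}_q$-basis so that Lemma~\ref{beta} applies, keeping precise track of the powers of $q$, the $q$-factorial normalisations, and the sign. An alternative, perhaps cleaner, route is to avoid the shifted functional altogether: use Lemma~\ref{affine} or a direct manipulation to relate the $(c,d)=(0,1)$ family to the shift operator on moments, or to recognise $\beta_{n+1}/\beta_1$ as $\Psi$ evaluated on a modified argument. But the hypergeometric-plus-$q$-Vandermonde strategy above is the most direct parallel to the preceding proof and is what I would write first; the $q$-Vandermonde step itself should be routine once the sum is in standard form.
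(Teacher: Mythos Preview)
Your proposal is correct and follows the same strategy as the paper: apply the shifted functional $f\mapsto\Psi(xf)$ to the explicit ${}_3\phi_2$ expansion, evaluate termwise via Lemma~\ref{beta}, and recognise the resulting sum as a ${}_2\phi_1$ that vanishes by $q$-Vandermonde. Your guessed final form $\pFq{2}{1}{q^{-n},q^{n+2}}{q^{3}}$ is exactly what appears.

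The one place where you take a small detour is the evaluation of $\Psi\bigl(x\,\qbase{k}{x}{k}_q\bigr)$. You propose expressing $x\,\qbase{k}{x}{k}_q$ as a two-term combination of $\qbase{k}{x}{k}_q$ and $\qbase{k+1}{x}{k+1}_q$ and then applying \eqref{eval_psi0} to each piece. This works, but it is unnecessary bookkeeping: since $x=[0]_q+q^{0}x$, multiplying $\asc(x,k)$ by $x$ simply extends the product one step downward, and one has the one-line identity
\[
x\,(q(1+(q-1)x);q)_k \;=\; [k+1]_q\,(q;q)_k\,\qbase{k}{x}{k+1}_q,
\]
so that Lemma~\ref{beta} with $(i,d)=(k,k+1)$ applies directly and gives $\Psi\bigl(\qbase{k}{x}{k+1}_q\bigr)=-\dfrac{1}{[k+1]_q[k+2]_q}$. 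This is how the paper proceeds; your two-term route reaches the same value $\Psi(x\,\qbase{k}{x}{k}_q)=-1/[k+2]_q$ after a short telescoping computation, so there is no gap, only a slightly longer path.
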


\begin{proof}
  Il suffit à nouveau de montrer que la forme linéaire $f \mapsto
  \Psi(x f)$ s'annule sur les polynômes $P_n$ pour ces paramètres lorsque $n\geq 1$ et vaut $\beta_1$ lorsque $n=0$.

  L'expression hypergéometrique \eqref{hypergeo_cd} donne la formule explicite
  \begin{equation}
    P_n(x) = \sum_{k=0}^{n} \frac{(q^{-n}, q^{n+2} ; q)_k (q (1 + (q - 1) x);q)_k q^k}{(q,q,q^2;q)_k}.
  \end{equation}

  L'expression $x (q (1 + (q - 1) x);q)_k$ vaut $[k+1]_q (q ; q)_k \qbase{k}{x}{k + 1}_q$.

  Un cas particulier du lemme \ref{beta} donne que
  \begin{equation}
    \label{eval_psi1}
    \Psi(\qbase{k}{x}{k+1}_q)= \frac{-1}{[k+1]_q[k+2]_q}.
  \end{equation}
  On en déduit alors que
  \begin{equation}
    \Psi(x P_n(x)) = - \sum_{k=0}^{n} \frac{(q^{-n}, q^{n+2} ; q)_k q^k}{(q,q^2;q)_k [k+2]_q} = -\frac{1}{[2]_q} \sum_{k=0}^{n} \frac{(q^{-n}, q^{n+2} ; q)_k q^k}{(q,q^3;q)_k}.
  \end{equation}
  On reconnaît la somme comme $\pFq{2}{1}{q^{-n}, q^{n + 2}}{q^{3}}$, qui
  est nul par la formule de $q$-Vandermonde lorsque $n\geq 1$.
\end{proof}

\begin{theorem}
  Les fractions $(\beta_n / \beta_2)_{n\geq 2}$ sont les moments des
  polynômes orthogonaux $P_n$ de paramètres $c=1$ et $d=1$.
\end{theorem}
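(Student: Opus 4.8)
Le plan est de reprendre la stratégie des deux théorèmes précédents : il suffit à nouveau de montrer que la forme linéaire $f \mapsto \Psi(x^2 f)$ s'annule sur les polynômes $P_n$ de paramètres $c=d=1$ lorsque $n\geq 1$, et vaut $\Psi(x^2)=\beta_2$ lorsque $n=0$. La formule hypergéométrique \eqref{hypergeo_cd} donne ici l'écriture explicite
\begin{equation*}
  P_n(x) = \sum_{k=0}^{n} \frac{(q^{-n}, q^{n+3} ; q)_k\,(q (1 + (q - 1) x);q)_k\, q^k}{(q,q^2,q^2;q)_k}.
\end{equation*}

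La principale difficulté, absente des cas $d=0$ et $d=1$, est le traitement du facteur $x^2$ : contrairement à $x (q(1+(q-1)x);q)_k$, qui est un multiple du seul polynôme $\qbase{k}{x}{k+1}_q$, l'expression $x^2 (q(1+(q-1)x);q)_k$ n'en est pas un. L'idée est de l'écrire comme un \emph{produit} de deux polynômes binomiaux du type \eqref{q_bino_dd}. Par \eqref{q_bino_dd} on a $\qbase{0}{x}{1}_q = x$, et l'égalité $x (q(1+(q-1)x);q)_k = [k+1]_q (q;q)_k \qbase{k}{x}{k+1}_q$ a déjà été établie ; en multipliant cette dernière par $x$, on obtient
\begin{equation*}
  x^2 (q(1+(q-1)x);q)_k = [k+1]_q\,(q;q)_k\,\qbase{0}{x}{1}_q\,\qbase{k}{x}{k+1}_q.
\end{equation*}
Le lemme \ref{alpha}, appliqué avec $i=0$, $d=1$, $j=k$, $e=k+1$, donne alors $\Psi(\qbase{0}{x}{1}_q\,\qbase{k}{x}{k+1}_q) = q/([k+2]_q[k+3]_q)$, d'où
\begin{equation*}
  \Psi(x^2 (q(1+(q-1)x);q)_k) = \frac{q\,[k+1]_q\,(q;q)_k}{[k+2]_q\,[k+3]_q}.
\end{equation*}
On pourrait aussi éviter le lemme \ref{alpha} en réduisant $x\,\qbase{k}{x}{k+1}_q = \qbase{k}{x}{k+1}_q + q\,[k+2]_q\,\qbase{k}{x}{k+2}_q$, puis en combinant \eqref{eval_psi1}, le lemme \ref{beta} et l'identité $[2]_q[k+2]_q - [k+3]_q = q\,[k+1]_q$.

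Il ne reste alors qu'un calcul de simplification routinier. En reportant cette évaluation dans la somme ci-dessus, le facteur $(q;q)_k$ se simplifie avec celui de $(q,q^2,q^2;q)_k$ ; en utilisant ensuite les identités $[k+1]_q/(q^2;q)_k = 1/(q;q)_k$ et $(q^2;q)_k\,[k+2]_q\,[k+3]_q = (q;q)_{k+3}/(1-q)^3$, on arrive à
\begin{equation*}
  \Psi(x^2 P_n(x)) = \frac{q}{[2]_q[3]_q}\sum_{k=0}^{n} \frac{(q^{-n}, q^{n+3} ; q)_k\, q^k}{(q,q^4;q)_k} = \frac{q}{[2]_q[3]_q}\,\pFq{2}{1}{q^{-n}, q^{n + 3}}{q^{4}}.
\end{equation*}
Cette série ${}_2\phi_1$ est nulle pour $n\geq 1$ par la formule de $q$-Vandermonde — le rapport des paramètres du haut et du bas valant $q^{4-(n+3)} = q^{1-n}$, exactement comme dans les deux théorèmes précédents — et vaut $1$ pour $n=0$ ; comme $q/([2]_q[3]_q) = \beta_2$, ceci conclut. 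Le seul ingrédient véritablement nouveau est donc la mise sous forme de produit qui permet d'invoquer le lemme \ref{alpha} pour le facteur $x^2$ ; tout le reste est parallèle aux preuves des deux théorèmes précédents.
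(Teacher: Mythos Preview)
Your proof is correct and follows essentially the same route as the paper's: the same factorisation $x^2 (q(1+(q-1)x);q)_k = [k+1]_q (q;q)_k\, \qbase{0}{x}{1}_q\, \qbase{k}{x}{k+1}_q$, the same application of Lemma~\ref{alpha}, and the same reduction to a ${}_2\phi_1$ handled by $q$-Vandermonde. The only additions are expository (the alternative avoiding Lemma~\ref{alpha}, and the explicit check $\beta_2=q/([2]_q[3]_q)$), which do not change the argument.
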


\begin{proof}
  Il suffit à nouveau de montrer que la forme linéaire $f \mapsto
  \Psi(x^2 f)$ s'annule sur les polynômes $P_n$ pour ces paramètres lorsque $n\geq 1$ et vaut $\beta_2$ lorsque $n=0$.

  L'expression hypergéometrique \eqref{hypergeo_cd} donne la formule explicite
  \begin{equation}
    P_n(x) = \sum_{k=0}^{n} \frac{(q^{-n}, q^{n+3} ; q)_k (q (1 + (q - 1) x);q)_k q^k}{(q,q^2,q^2;q)_k}.
  \end{equation}
  L'expression $x^2 (q (1 + (q - 1) x);q)_k$ vaut $[k+1]_q (q ; q)_k \qbase{0}{x}{1}_q \qbase{k}{x}{k + 1}_q$.
  Un cas particulier du lemme \ref{alpha} donne que
  \begin{equation}
    \label{eval_psi2}
    \Psi(\qbase{0}{x}{1}_q \qbase{k}{x}{k+1}_q)= \frac{q}{[k+2]_q[k+3]_q}.
  \end{equation}
  On en déduit alors que
  \begin{equation}
    \Psi(x^2 P_n(x)) = q \sum_{k=0}^{n} \frac{(q^{-n}, q^{n+3} ; q)_k [k+1]_q q^k}{(q^2,q^2;q)_k [k+2]_q [k+3]_q} = \frac{q}{[2]_q[3]_q} \sum_{k=0}^{n} \frac{(q^{-n}, q^{n+3} ; q)_k q^k}{(q,q^4;q)_k}.
  \end{equation}
  On reconnaît la somme comme $\pFq{2}{1}{q^{-n}, q^{n + 3}}{q^{4}}$, qui
  est nul par la formule de $q$-Vandermonde lorsque $n\geq 1$.
\end{proof}

\subsection{Polynômes orthogonaux de type $q$-Legendre}

On introduit une autre famille de polynômes en $x$ définis par la formule
\begin{equation}
  \label{hypergeo_avec_z}
  P_n(x) = \pFq{3}{2}{q^{-n}, q^{n + 1},
    q (1 + (q - 1) x)} {q, q (1 + (q - 1) z)},
\end{equation}
où le paramètre $z$ est une variable.

Ces polynômes sont l'évaluation en $q (1 + (q - 1) x)$ de polynômes
$Q_n$ de type ``grand $q$-Legendre'', qui forment une famille
classique de polynômes orthogonaux (voir \cite[\S 14.5.1]{koekoek_book}).

Lorsque $z=0$, on retrouve le cas $c=d=0$ de type $q$-Hahn considéré
précédemment.

On va calculer leurs moments en termes de polynômes en $z$ obtenus par
intégration des polynômes de $q$-Bernoulli-Carlitz définis par
\eqref{poly_qbc}.

Pour simplifier les notations, on pose $c = 1+(q-1)z$ dans cette section.

On rappelle que la $q$-intégrale de Jackson est définie par
\begin{equation}
\int_a^b f(t)d_qt=b(1-q)\sum_{k=0}^\infty f(bq^k)q^k-a(1-q)\sum_{k=0}^\infty f(aq^k)q^k.
\end{equation}

\begin{lemma}
  Les moments des polynômes $P_n$ sont donnés par
  \begin{align}
    \mu_n=\frac{1}{(q-1)^n}\sum_{k=0}^n {\binom{n}{k}} (-1)^{n-k} \frac{[k+1]_c}{[k+1]_q}.
  \end{align}
\end{lemma}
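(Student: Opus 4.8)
Le plan est d'exploiter l'orthogonalité classique des polynômes de ``grand $q$-Legendre'' par rapport à une $q$-intégrale de Jackson, de calculer les moments correspondants, puis de transporter le résultat par le changement de variable affine du Lemme~\ref{affine}.

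Je commencerais par observer que, pour les paramètres considérés, $P_n(x)=Q_n\bigl(q(1+(q-1)x)\bigr)$ où $Q_n$ est le polynôme de grand $q$-Legendre de paramètres $(a,b)=(1,c)$ au sens de \cite[\S 14.5.1]{koekoek_book} : les paramètres de dénominateur $q$ et $q(1+(q-1)z)=qc$ de \eqref{hypergeo_avec_z} correspondent en effet à $aq=q$ et $bq=qc$. Ces polynômes sont orthogonaux par rapport à une $q$-intégrale de Jackson de poids constant sur $[qc,q]$ ; les moments normalisés $\nu_n$ de la famille $(Q_n)_{n\geq 0}$ valent donc $\nu_n=\left(\int_{qc}^{q} t^n\, d_q t\right)\big/\left(\int_{qc}^{q} d_q t\right)$.

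Je calculerais ensuite ces $q$-intégrales directement à partir de la définition de l'intégrale de Jackson : en prenant $f(t)=t^n$, $a=qc$ et $b=q$, les deux sommes géométriques obtenues (de raison $q^{n+1}$) se somment et donnent
\begin{equation*}
  \int_{qc}^{q} t^n\, d_q t=q^{n+1}(1-q)\,\frac{1-c^{n+1}}{1-q^{n+1}}=q^{n+1}(1-c)\,\frac{[n+1]_c}{[n+1]_q},
\end{equation*}
où $[n+1]_c$ désigne $(c^{n+1}-1)/(c-1)$. En particulier $\int_{qc}^{q} d_q t=q(1-c)$, d'où $\nu_n=q^n\,[n+1]_c/[n+1]_q$.

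Enfin, puisque $P_n(x)$ est, à un facteur scalaire près (sans incidence sur les moments), égal à $Q_n(Ax+B)$ avec $A=q(q-1)$ et $B=q$, le Lemme~\ref{affine} fournit
\begin{equation*}
  \mu_n=A^{-n}\sum_{k=0}^{n}\binom{n}{k}(-B)^{n-k}\nu_k
  =\frac{1}{q^n(q-1)^n}\sum_{k=0}^{n}\binom{n}{k}(-q)^{n-k}\,q^k\,\frac{[k+1]_c}{[k+1]_q},
\end{equation*}
et l'égalité $(-q)^{n-k}q^k=(-1)^{n-k}q^n$ fait disparaître le facteur $q^n$, ce qui donne la formule annoncée. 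L'étape la plus susceptible de poser problème n'est pas une difficulté conceptuelle mais le soin à apporter au décompte (puissances de $q$, signes, orientation et bornes de la $q$-intégrale, identification des paramètres $(a,b)=(1,c)$ et du support $[qc,q]$), et le fait de bien faire transiter le passage de $Q_n$ à $P_n$ par le Lemme~\ref{affine} — la renormalisation des polynômes n'affectant pas les moments.
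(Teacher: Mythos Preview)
Your proof is correct and follows essentially the same route as the paper: identify the big $q$-Legendre orthogonality on $[qc,q]$ with constant weight, compute the Jackson integral to get $\nu_n=q^n[n+1]_c/[n+1]_q$, and then apply Lemma~\ref{affine} with $A=q(q-1)$, $B=q$. Your additional remarks (explicit identification of the parameters $(a,b)=(1,c)$, the observation that the scalar normalisation of the $P_n$ does not affect the moments, and the simplification $(-q)^{n-k}q^k=(-1)^{n-k}q^n$) are correct and make the argument slightly more explicit than the paper's version.
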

\begin{proof}
  On sait (voir \cite{koekoek_book}) que les polynômes ``grand $q$-Legendre''
  $Q_n$ de paramètre $c$ sont orthogonaux pour les moments
  \begin{equation}
    \nu_n=\frac{1}{\int_{cq}^q x^0 d_qx}\int_{cq}^q x^n d_qx=
    \frac{q^{n}(1-c^{n+1})(1-q)}{(1-c)(1-q^{n+1})}=q^n\frac{[n+1]_c}{[n+1]_q}.
  \end{equation}
  
  Cette égalité résulte du calcul suivant :
  \begin{align*}
    \int_{cq}^q x^n d_qx&=q(1-q)\sum_{k\geq 0}(q^{k+1})^n q^k-
    cq(1-q)\sum_{k\geq 0}(cq^{k+1})^n q^k\\
    &=q^{n+1}(1-q)\left(\sum_{k\geq 0}(q^{n+1})^k-c^{n+1} \sum_{k\geq 0}(q^{n+1})^k\right)\\
    &=q^{n+1}(1-q)\frac{1-c^{n+1}}{1-q^{n+1}}.
  \end{align*}
  Par le lemme \ref{affine}, les moments des polynômes $P_n$ sont donc
  \begin{align*}
    \mu_n = q^{-n}(q-1)^{-n} \sum_{k=0}^n {\binom{n}{k}} (-q)^{n-k} \nu_k,
  \end{align*}
  ce qui donne le résultat voulu.
\end{proof}

\begin{lemma}
  On a
  \begin{align}\label{last}
    \frac{1}{z}\int_0^z\beta_n(y) dy=\frac{1}{(q-1)^n}\sum_{k=0}^n {\binom{n}{k}} (-1)^{n-k} \frac{[k+1]_c}{[k+1]_q}.
  \end{align}
\end{lemma}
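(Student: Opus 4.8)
The plan is to reduce \eqref{last} to the single evaluation
\begin{equation*}
  \Psi\bigl((1+(q-1)x)^k\bigr) = \frac{k+1}{[k+1]_q}, \qquad k \geq 0,
\end{equation*}
and then to integrate term by term.

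First I would rewrite the polynomial $\beta_n(y)$ of \eqref{poly_qbc} in a form adapted to this. Setting $w = 1+(q-1)y$ and $v = 1+(q-1)x$, one checks the identity $y + (y(q-1)+1)x = (wv-1)/(q-1)$, whence
\begin{equation*}
  \beta_n(y) = \frac{1}{(q-1)^n}\,\Psi\bigl((wv-1)^n\bigr)
  = \frac{1}{(q-1)^n}\sum_{k=0}^{n}\binom{n}{k}(-1)^{n-k}(1+(q-1)y)^k\,\Psi\bigl((1+(q-1)x)^k\bigr).
\end{equation*}
Granting the evaluation above, the $k$-th summand carries the factor $k+1$; since $\int_0^z (1+(q-1)y)^k\,dy = \bigl((1+(q-1)z)^{k+1}-1\bigr)/\bigl((q-1)(k+1)\bigr)$, integrating in $y$ from $0$ to $z$ cancels that factor. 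Dividing the result by $z$ and using $c-1 = (q-1)z$, so that $\bigl(c^{k+1}-1\bigr)/\bigl((q-1)z\bigr) = [k+1]_c$, one obtains exactly the right-hand side of \eqref{last}.

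The substance of the proof is therefore the evaluation of $\Psi\bigl((1+(q-1)x)^k\bigr)$. For this I would return to the defining relations \eqref{defbeta}, which state that $q\,\Psi\bigl((qx+1)^n\bigr) - \Psi(x^n)$ equals $q-1$ when $n=0$, equals $1$ when $n=1$, and vanishes for $n \geq 2$. With $v := 1+(q-1)x$ one has $x = (v-1)/(q-1)$ and $qx+1 = (qv-1)/(q-1)$; re-expanding $\Psi\bigl((qx+1)^n\bigr)$ and $\Psi(x^n)$ in the quantities $S_k := \Psi(v^k)$ by the binomial theorem, and using $q^{j+1}-1 = (q-1)[j+1]_q$, the relations \eqref{defbeta} become the triangular linear system
\begin{equation*}
  \sum_{j=0}^{n}\binom{n}{j}(-1)^{n-j}[j+1]_q\,S_j = R_n,
\end{equation*}
where $R_0 = R_1 = 1$ and $R_n = 0$ for $n \geq 2$. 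The coefficient of $S_n$ here is $[n+1]_q \neq 0$, so this system determines the sequence $(S_k)_{k \geq 0}$ uniquely, and it suffices to verify that $S_k = (k+1)/[k+1]_q$ is a solution: substituting, the left-hand side collapses to $\sum_{j=0}^{n}\binom{n}{j}(-1)^{n-j}(j+1)$, the $n$-th finite difference at $0$ of the affine function $j \mapsto j+1$, which equals $1$ for $n \in \{0,1\}$ and $0$ for $n \geq 2$, in agreement with $R_n$.

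I expect the only mildly delicate point to be this reformulation of \eqref{defbeta}, in particular keeping track of the powers of $q-1$; the rest is routine polynomial algebra and integration. One could alternatively derive the evaluation of $\Psi\bigl((1+(q-1)x)^k\bigr)$ from Lemma~\ref{beta}, by re-expanding $(1+(q-1)x)^k$ in terms of the polynomials $\qbase{j}{x}{j}_q$ whose $\Psi$-values are recorded in \eqref{eval_psi0}, but the route through \eqref{defbeta} is shorter.
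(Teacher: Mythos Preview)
Your argument is correct. The reduction of \eqref{last} to the expansion
\[
  \beta_n(y)=\frac{1}{(q-1)^n}\sum_{k=0}^{n}\binom{n}{k}(-1)^{n-k}\frac{k+1}{[k+1]_q}\,(1+(q-1)y)^k
\]
is exactly what the paper does as well: there it is obtained by multiplying \eqref{last} by $z$, differentiating, and identifying the result with Carlitz's formula (5.3) from \cite{carlitz_qbern} (plus a check at $z=0$). The substantive difference is that you \emph{prove} this expansion internally, via the evaluation $\Psi\bigl((1+(q-1)x)^k\bigr)=(k+1)/[k+1]_q$ derived from the defining relations \eqref{defbeta}, whereas the paper simply cites Carlitz. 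Your triangular-system argument is clean and makes the lemma self-contained; the paper's route is shorter but depends on an external reference. Both arrive at the same intermediate formula, and the passage between it and \eqref{last} is the same integration/differentiation in opposite directions.
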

\begin{proof}
  On note que $1-c=(1-q)z$ et que
  \begin{equation}
    \frac{d}{dz}\left(1-c^{k+1}\right)=(1-q)(k+1)c^k.
  \end{equation}
  En multipliant \eqref{last} par $z$, puis en dérivant par rapport à $z$,
  on obtient
  \begin{align}\label{check}
    \beta_n(z)=\frac{1}{(q-1)^n}\sum_{k=0}^n {\binom{n}{k}} (-1)^{n-k} \frac{(k+1)}{[k+1]_q}(1+(q-1)z)^k.
  \end{align}
  Cette équation est exactement la formule (5.3) de
  \cite{carlitz_qbern}, modulo le changement de variables
  \eqref{q_poly_relation}. Il reste à vérifier que \eqref{last} est
  vraie en $z=0$, ce qui résulte aussi de \eqref{check} en $z=0$.
\end{proof}

\begin{theorem}
  Les polynômes $(\frac{1}{z}\int_{0}^z
  \beta_n(y) dy)_{n\geq 0}$ sont les moments des polynômes orthogonaux
  $P_n$.
\end{theorem}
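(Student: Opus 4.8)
Le plan est de déduire l'énoncé directement des deux lemmes qui le précèdent, sans aucun nouveau calcul. Je rappellerais d'abord que les polynômes $P_n$ définis par \eqref{hypergeo_avec_z} forment bien une famille de polynômes orthogonaux : ce sont, via le lemme \ref{affine}, une reparamétrisation affine des polynômes classiques de type ``grand $q$-Legendre'', on a $P_0=1$ et $\deg P_n = n$. Or une telle famille, une fois fixée la valeur $\mu_0$ du moment d'ordre zéro, détermine de façon unique sa forme linéaire des moments : si $L$ désigne cette forme, alors $L(P_n)=L(P_0 P_n)=0$ pour $n\geq 1$, et comme $(P_0,\dots,P_n)$ est une base des polynômes de degré $\leq n$, les valeurs $L(x^n)$ sont entièrement déterminées (c'est la partie ``unicité'' du théorème de Favard, voir \cite[p. 21-22]{chihara}).

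Ensuite, il suffit de comparer les deux formules closes déjà établies. Le premier des deux lemmes qui précèdent donne l'expression explicite des moments
\begin{equation*}
  \mu_n=\frac{1}{(q-1)^n}\sum_{k=0}^n {\binom{n}{k}} (-1)^{n-k} \frac{[k+1]_c}{[k+1]_q},
\end{equation*}
et le second lemme établit exactement la même expression pour le polynôme $\frac{1}{z}\int_0^z\beta_n(y)\,dy$, via la formule (5.3) de \cite{carlitz_qbern}. Les deux membres de droite coïncident pour tout $n\geq 0$ ; en particulier, pour $n=0$, ils valent tous deux $1$, puisque d'une part $[1]_c=[1]_q=1$ et d'autre part $\beta_0(y)=1$, ce qui fixe la normalisation $\mu_0=1$. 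On conclut que $\mu_n=\frac{1}{z}\int_0^z\beta_n(y)\,dy$ pour tout $n$, ce qui est précisément l'énoncé.

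La difficulté principale n'est donc pas dans ce théorème, qui n'est qu'un corollaire de ce qui précède : tout le travail a été fait dans les deux lemmes, et en particulier dans l'identification de $\frac{1}{z}\int_0^z\beta_n(y)\,dy$ avec une somme hypergéométrique finie. Le seul point à surveiller est que la notion de ``moments des polynômes $P_n$'' est bien dénuée d'ambiguïté, ce qui est garanti par le caractère orthogonal des $P_n$ et par l'égalité des termes d'ordre zéro rappelée ci-dessus ; j'insisterais donc sur ces deux points dans la rédaction, le reste se réduisant à invoquer les deux lemmes.
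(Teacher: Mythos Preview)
Your proposal is correct and follows exactly the same route as the paper: both proofs simply invoke the two preceding lemmas (which give the same closed formula for $\mu_n$ and for $\tfrac{1}{z}\int_0^z\beta_n(y)\,dy$) and check the normalisation at $n=0$ via $\beta_0(y)=1$. You have merely made explicit the uniqueness argument (Favard) and the normalisation check that the paper leaves implicit in its one-line proof.
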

\begin{proof}
  Ceci résulte des deux lemmes précédents. Il n'est pas nécessaire de
  normaliser les moments, car $\beta_0(z)=1$.
\end{proof}

On introduit la série génératrice des moments
\begin{equation}
  \widehat{B}_z(x)= \sum_{n \geq 0} \left(\frac{1}{z}\int_{0}^{z}\beta_n(y) dy\right) x^n. 
\end{equation}

\section{Récurrences}
\label{recurs}

Chaque famille de polynômes orthogonaux (supposés unitaires) vérifie
une récurrence à trois termes, de la forme
\begin{equation}
  p_{n+1} = (a_n + x) p_n - b_n p_{n-1},
\end{equation}
pour deux suites de coefficients $a_n$ et $b_n$. On va calculer ces
récurrences pour les familles de polynômes considérées, en partant de
la récurrence générale connue pour les polynômes de type $q$-Hahn et
de type ``grand $q$-Legendre''.

\subsection{Récurrences pour le type $q$-Hahn}

Selon \cite[\S 14.6]{koekoek_book}, la version unitaire $q_n$ des
polynômes $Q_n$ définis par la formule \eqref{hypergeo_cd} (avec $x$
au lieu de $q(1+(q-1)x)$) vérifie la récurrence
\begin{equation}
  q_{n+1} = (A_n + C_n - 1 + x) q_n - A_{n-1} C_n q_{n-1},
\end{equation}
où
\begin{equation}
  A_n = \frac{(1-q^{n+d+1})(1-q^{n+c+1})(1-q^{n+c+d+1})}{(1-q^{2n+c+d+1})(1-q^{2n+c+d+2})}
\end{equation}
et
\begin{equation}
  C_n = -\frac{q^{n+c+d+1}(1-q^n)(1-q^{n+c})(1-q^{n+d})}{(1-q^{2n+c+d})(1-q^{2n+c+d+1})}.
\end{equation}

On utilise ensuite le lemme \ref{affine} pour le changement de
variables $x \mapsto q(1+(q-1)x)$. On obtient la récurrence
\begin{equation}
  p_{n+1} = \left((A_n + C_n - 1 + q)/(q(q-1)) + x\right) p_n - A_{n-1} C_n /(q(q-1))^2 p_{n-1},
\end{equation}
pour les versions unitaires $p_n$ des polynômes $P_n$ définis par
\eqref{hypergeo_cd}.

Considérons les trois cas particuliers qui nous intéressent.

$\bullet$ Pour $(c,d)=(0,0)$, on obtient
\begin{equation}
  A_n = \frac{(1-q^{n+1})^3}{(1-q^{2n+1})(1-q^{2n+2})}
\end{equation}
et
\begin{equation}
  C_n = -\frac{q^{n+1}(1-q^n)^3}{(1-q^{2n})(1-q^{2n+1})}.
\end{equation}
La récurrence est donc donnée par
\begin{equation}
  \label{recu00}
  p_{n+1} = \left(\frac{[2n+1]_q+[n+1]_q-3[n]_q}{(1+q^n)(1+q^{n+1})} + x\right) p_n + \frac{q^{n-1}[n]_q^6}{[2n-1]_q[2n]_q^2[2n+1]_q} p_{n-1}.
\end{equation}

\medskip

$\bullet$ Pour $(c,d)=(0,1)$, on obtient
\begin{equation}
  A_n = \frac{(1-q^{n+1})(1-q^{n+2})^2}{(1-q^{2n+2})(1-q^{2n+3})}
\end{equation}
et
\begin{equation}
  C_n = -\frac{q^{n+2}(1-q^n)^2(1-q^{n+1})}{(1-q^{2n+1})(1-q^{2n+2})}.
\end{equation}
La récurrence est donc donnée par
\begin{equation}
  \label{recu01}
  p_{n+1} = \left(a_n + x\right) p_n +  \frac{q^n [n]_q^3[n+1]_q^3}{[2n]_q[2n+1]_q^2[2n+2]_q} p_{n-1},
\end{equation}
où le coefficient $a_n = (A_n + C_n - 1 + q)/(q(q-1))$ ne se simplifie pas spécialement.

$\bullet$ Pour $(c,d)=(1,1)$, on obtient
\begin{equation}
  A_n = \frac{(1-q^{n+2})^2(1-q^{n+3})}{(1-q^{2n+3})(1-q^{2n+4})}
\end{equation}
et
\begin{equation}
  C_n = -\frac{q^{n+3}(1-q^n)(1-q^{n+1})^2}{(1-q^{2n+2})(1-q^{2n+3})}.
\end{equation}
La récurrence est donc donnée par
\begin{equation}
  \label{recu11}
  p_{n+1} = \left(\frac{(q-1)[n+1]_q[n+2]_q}{(1+q^{n+1})(1+q^{n+2})} + x\right) p_n + \frac{q^{n+1}[n]_q[n+1]_q^4[n+2]_q}{[2n+1]_q[2n+2]_q^2[2n+3]_q} p_{n-1}.
\end{equation}

\subsection{Récurrence pour le type grand $q$-Legendre}

Selon \cite[\S 14.5.1]{koekoek_book}, la version unitaire $q_n$ des polynômes $Q_n$ définis
par la formule \eqref{hypergeo_avec_z} (avec $x$ au lieu de $q(1+(q-1)x)$) vérifie la récurrence
\begin{equation}
  q_{n+1} = (A_n + C_n - 1 + x) q_n - A_{n-1} C_n q_{n-1},
\end{equation}
où
\begin{equation}
  A_n = \frac{(1-q^{n+1})^2(1-(1+(q-1)z) q^{n+1})}{(1-q^{2n+1})(1-q^{2n+2})}
\end{equation}
et
\begin{equation}
  C_n = -\frac{q^{n+1}(1-q^n)^2(1-q^n+(q-1)z)}{(1-q^{2n})(1-q^{2n+1})}.
\end{equation}
On utilise ensuite comme précédemment le lemme \ref{affine} pour le changement de variables $x \mapsto q(1+(q-1)x)$.

La récurrence pour la version unitaire $p_n$ des $P_n$ définis par \eqref{hypergeo_avec_z} est donc donnée par
\begin{equation}
  \label{recu00z}
  p_{n+1} = \left(\frac{[2n+1]_q+[n+1]_q-3[n]_q-2 q^n z}{(1+q^n)(1+q^{n+1})} + x\right) p_n + \frac{q^{n-1}[n]_q^4([n]_q-z)([n]_q+q^n z)}{[2n-1]_q[2n]_q^2[2n+1]_q} p_{n-1}.
\end{equation}

\section{Déterminant de Hankel et fractions continues}

\label{hankel}

La théorie générale des polynômes orthogonaux donne des informations
précises sur les déterminants de Hankel des moments, et sur certaines
fractions continues pour la série génératrice ordinaire des
moments. Les résultats dont nous aurons besoin se trouvent dans
\cite[\S 2.7]{kratt_det} et \cite[\S 5.4]{kratt_det2}, où le
lecteur peut trouver d'autres références. On les rassemble dans le
théorème-omnibus suivant.

\begin{theorem}
  \label{omnibus}
  Soit $(p_n)_{n \geq 0}$ une famille de polynômes orthogonaux unitaires
  vérifiant la récurrence
  \begin{equation}
    \label{eq:pn}
    p_{n+1} = (a_n + x) p_n - b_n p_{n-1},
  \end{equation}
  avec les conditions initiales $p_{-1}=0$ et $p_0=1$. Soient $\mu_n$
  les moments de cette famille, qu'on normalise en supposant $\mu_0=1$. Alors on a un développement en fraction continue
  \begin{equation}
    \label{Jfraction}
    \sum_{k \geq 0} \mu_k x^k = \cfrac{1}{1+ a_0 x
      - \cfrac{b_1 x^2}{1+ a_1 x
        - \cfrac{b_2 x^2}{1+ a_2 x - \dots}}}
  \end{equation}
  et une factorisation du déterminant de Hankel
  \begin{equation}
    \label{eqgeneral1}
    d_{n}^{(0)} := \det_{0 \leq i,j \leq n-1} \mu_{i+j} = \prod_{k=1}^{n-1}b_k^{n-k}.
  \end{equation}

  Si $(q_n)_{n\geq 0}$ est la suite définie par la récurrence
  \begin{equation}\label{eq:qn}
    q_0 = 1,\quad q_1 = - a_0 \quad \text{et}\quad q_{n+1} = - a_n q_n - b_n q_{n-1},
  \end{equation}
  alors on a aussi une factorisation du déterminant de Hankel décalé
  \begin{align}
    d_{n}^{(1)} := \det_{0\leq i,j\leq n-1} \mu_{i+j+1}&=q_n \prod_{k=1}^{n-1}b_k^{n-k}.\label{eqgeneral2}
  \end{align}

\end{theorem}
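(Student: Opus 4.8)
The three assertions are classical consequences of the general theory of orthogonal polynomials, so the plan is to assemble them from the standard toolkit (as in the cited references \cite{kratt_det, kratt_det2}); I outline the arguments. Throughout, let $\Psi$ denote the linear functional, normalized by $\mu_0=\Psi(1)=1$, for which the $p_n$ are orthogonal, and set $\lambda_n=\Psi(p_n^2)$ with $\lambda_0=1$. I would first record that $\lambda_n=b_1b_2\cdots b_n$: multiplying \eqref{eq:pn} by $p_{n-1}$ and applying $\Psi$ gives $0=\Psi(x\,p_n p_{n-1})-b_n\lambda_{n-1}$, while $\Psi(x\,p_n p_{n-1})=\Psi(p_n\cdot x p_{n-1})=\Psi(p_n^2)=\lambda_n$ since $x p_{n-1}$ is monic of degree $n$ and $p_n$ is orthogonal to lower degrees; hence $\lambda_n=b_n\lambda_{n-1}$. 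Then the unitriangular change of basis from $(1,x,\dots,x^{n-1})$ to $(p_0,\dots,p_{n-1})$, applied to rows and columns of the Hankel matrix $(\mu_{i+j})_{0\le i,j\le n-1}$, turns it — without changing the determinant — into the diagonal Gram matrix with entries $\lambda_0,\dots,\lambda_{n-1}$, so $d_n^{(0)}=\prod_{k=0}^{n-1}\lambda_k=\prod_{k=1}^{n-1}b_k^{\,n-k}$, which is \eqref{eqgeneral1}.

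For the continued fraction \eqref{Jfraction}, I would introduce for each $j\ge 0$ the $j$-th associated family, obtained by shifting the recurrence coefficients to $a_{n+j},b_{n+j}$, and let $F_j(x)=\sum_{k\ge0}\mu_k^{(j)}x^k$ be its moment generating series, so that $F_0(x)=\sum_k\mu_k x^k$. The key step is the functional equation
\[
  F_j(x)=\frac{1}{1+a_j x-b_{j+1}x^2F_{j+1}(x)},
\]
which I would obtain either by eliminating moments from the three-term recurrence or, more transparently, from the first-return decomposition of the weighted Motzkin paths enumerated by the $\mu_k^{(j)}$ (an empty path, a level step at height $0$ of weight $-a_j$ followed by an $F_j$-path, or an up-step, an $F_{j+1}$-excursion, a down-step of weight $b_{j+1}$, and an $F_j$-path). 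Iterating this identity produces exactly \eqref{Jfraction}.

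For the shifted Hankel determinant \eqref{eqgeneral2}, I would use the determinantal expression for the monic orthogonal polynomial,
\[
  p_n(x)=\frac{1}{d_n^{(0)}}\det
  \begin{pmatrix}
    \mu_0 & \mu_1 & \cdots & \mu_n\\
    \mu_1 & \mu_2 & \cdots & \mu_{n+1}\\
    \vdots & \vdots & & \vdots\\
    \mu_{n-1} & \mu_n & \cdots & \mu_{2n-1}\\
    1 & x & \cdots & x^n
  \end{pmatrix},
\]
which is valid because this determinant is monic of degree $n$ and is annihilated by $f\mapsto\Psi(x^m f)$ for $0\le m<n$ (the last row then repeats an earlier one). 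Setting $x=0$ and expanding along the last row leaves only the entry $1$, whose complementary minor is $\det(\mu_{i+j+1})_{0\le i,j\le n-1}=d_n^{(1)}$ with cofactor sign $(-1)^n$; hence $d_n^{(1)}=(-1)^n p_n(0)\,d_n^{(0)}$. Finally, evaluating \eqref{eq:pn} at $x=0$ gives $p_{n+1}(0)=a_n p_n(0)-b_n p_{n-1}(0)$ with $p_0(0)=1$ and $p_1(0)=a_0$, so the substitution $q_n=(-1)^n p_n(0)$ reproduces exactly the recurrence \eqref{eq:qn}; combining with \eqref{eqgeneral1} yields \eqref{eqgeneral2}.

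None of these steps is genuinely deep, but the point requiring the most care is the functional equation for the $F_j$: one must make sure the associated moment functionals are well defined and keep track of signs so that the denominators come out as $1+a_j x$ rather than $1-a_j x$, matching the exact shape of \eqref{Jfraction}. The sign bookkeeping in the cofactor expansion giving $d_n^{(1)}$ also deserves a careful check, but it is routine.
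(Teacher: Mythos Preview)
Your proposal is correct, and for the only part the paper actually argues in its proof---the shifted Hankel identity \eqref{eqgeneral2}---your approach is identical to the paper's: both use the determinantal expression for $p_n(x)$, evaluate at $x=0$ to get $d_n^{(1)}=(-1)^n p_n(0)\,d_n^{(0)}$, and identify $q_n=(-1)^n p_n(0)$ from the recurrences. For \eqref{Jfraction} and \eqref{eqgeneral1} the paper simply refers to the literature, so your Gram-matrix diagonalization and Motzkin-path/associated-family arguments go beyond what the paper supplies, but they are standard and sound.
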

\begin{proof}
  On renvoie aux références citées pour la preuve des principaux
  résultats énoncés. On se contente ici d'une esquisse de preuve de
  \eqref{eqgeneral2}.

  En comparant \eqref{eq:qn} et \eqref{eq:pn}, on voit que
  $q_n=(-1)^np_n(0)$. Par ailleurs, il existe une formule déterminantale pour  $p_n(x)$ en fonction des moments :
  \begin{align}\label{eqexplicit}
    p_n(x)=\frac{1}{d_{n}^{(0)}}\left|
      \begin{array}{cccc}
        \mu_0 & \mu_1 & \ldots & \mu_{n} \\
        \mu_1 & \mu_2 & \ldots & \mu_{n+1} \\
        \vdots & \vdots & \vdots & \vdots  \\
        \mu_{n-1} & \mu_{n} & \ldots & \mu_{2n-1} \\
        1 & x  & \ldots & x^{n} \\
      \end{array}
    \right|.
  \end{align}
  En posant $x=0$ dans \eqref{eqexplicit}, on obtient
  $p_n(0)=(-1)^n d_{n}^{(1)}/d_{n}^{(0)}$, ce qui équivaut à \eqref{eqgeneral2}. 
\end{proof}

On obtient ainsi des fractions continues pour $\widehat{B}(x)$,
$\widehat{B}_1(x)$, $\widehat{B}_2(x)$ et $\widehat{B}_z(x)$, de la
forme donnée par \eqref{Jfraction} dans le théorème \ref{omnibus},
ayant pour coefficients ceux des récurrences \eqref{recu00},
\eqref{recu01}, \eqref{recu11} et \eqref{recu00z}.

On obtient aussi les formules suivantes de factorisation des déterminant de Hankel.
\begin{theorem}
  On a
  \begin{align}
    \det_{0\leq i,j\leq n-1}\beta_{i+j}&=(-1)^{\binom{n}{2}} q^{\binom{n}{3}}\prod_{i=1}^{n-1}
    \frac{[i]!_q^6}{[2i]!_q[2i+1]!_q},\label{hank0}\\
\det_{0\leq i,j\leq n-1}\beta_{i+j+1}&=\frac{(-1)^{\binom{n+1}{2}}}{[2]_q} q^{\binom{n+1}{3} }\prod_{i=1}^{n-1}
\frac{[i]!_q^3 [i+1]!_q^3}{[2i+1]!_q[2i+2]!_q},\label{hank1}\\
\det_{0\leq i,j\leq n-1}\beta_{i+j+2}&=
\frac{(-1)^{\binom{n}{2}}}{[2]_q[3]_q} q^{\binom{n+2}{3}}
\prod_{i=1}^{n-1}\frac{[i]!_q [i+1]!_q^4[i+2]!_q}{[2i+2]!_q[2i+3]!_q},\label{hank2}\\
\det_{0\leq i,j\leq n-1}\beta_{i+j+3}&=
\frac{(-1)^{\binom{n+1}{2}}}{[3]_q^2 [4]_q} q^{\binom{n+2}{3}} \left(q^{\binom{n+2}{2}}+(-1)^n\right) \prod_{i=1}^{n-1} \frac{[i+1]!_q^3 [i+2]!_q^3}{[2i+3]!_q [2i+4]!_q}\label{hank3}
\end{align}
et
\begin{equation}
  \label{hankz}
  \det_{0 \leq i,j \leq n-1} \frac{1}{z}\int_{0}^{z}\beta_{i+j}(y) dy = (-1)^{\binom{n}{2}} q^{\binom{n}{3}} \prod_{i=1}^{n-1} \frac{[i]!_q^4 \asc(z,i)\desc(z,i)}{[2i]!_q [2i+1]!_q},
\end{equation}
avec les notations \eqref{def_asc} et \eqref{def_desc}.
\end{theorem}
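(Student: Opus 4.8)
The plan is to derive all five identities from Theorem~\ref{omnibus}, applied to the four families of orthogonal polynomials whose moment sequences were determined in Section~2, via the explicit three-term recurrences \eqref{recu00}, \eqref{recu01}, \eqref{recu11} and \eqref{recu00z}.

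The first four follow from the Hankel factorization \eqref{eqgeneral1}. Indeed $(\beta_n)_{n\ge0}$ is exactly the (normalized, since $\beta_0=1$) moment sequence of the $q$-Hahn family with $c=d=0$; the sequences $(\beta_{n+1}/\beta_1)_{n\ge0}$ and $(\beta_{n+2}/\beta_2)_{n\ge0}$ are the normalized moment sequences of the families $c=0,d=1$ and $c=d=1$; and $\bigl(\frac1z\int_0^z\beta_n(y)\,dy\bigr)_{n\ge0}$ is the (normalized, since $\beta_0(z)=1$) moment sequence of the big $q$-Legendre family. So, denoting by $b_k$ the lower coefficients of the corresponding recurrences and using $\beta_1=-1/[2]_q$, $\beta_2=q/([2]_q[3]_q)$, the determinants on the left of \eqref{hank0}, \eqref{hank1}, \eqref{hank2}, \eqref{hankz} equal $\prod_{k=1}^{n-1}b_k^{n-k}$, $\beta_1^{\,n}\prod_{k=1}^{n-1}b_k^{n-k}$, $\beta_2^{\,n}\prod_{k=1}^{n-1}b_k^{n-k}$ and $\prod_{k=1}^{n-1}b_k^{n-k}$ respectively. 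It then remains to put each $\prod_{k=1}^{n-1}b_k^{n-k}$ into the announced closed form. Writing $\prod_{k=1}^{n-1}b_k^{n-k}=\prod_{j=1}^{n-1}\bigl(\prod_{k=1}^{j}b_k\bigr)$, one evaluates the partial products $\prod_{k=1}^{j}b_k$ first and then multiplies over $j$. This is routine telescoping: the signs collect through $\prod_{k=1}^{n-1}(-1)^{n-k}=(-1)^{\binom n2}$, the powers of $q$ through hockey-stick identities such as $\sum_{j=1}^{n-1}\binom j2=\binom n3$ and $\sum_{j=1}^{n-1}\binom{j+1}2=\binom{n+1}3$, the $q$-integers recombine into $q$-factorials through $\prod_{k=1}^{j}[2k-1]_q\cdot\prod_{k=1}^{j}[2k]_q=[2j]!_q$ and $\prod_{k=1}^{j}[2k]_q\cdot\prod_{k=1}^{j}[2k+1]_q=[2j+1]!_q$, and in the $q$-Legendre case the factors $[k]_q-z$ and $[k]_q+q^kz$ of \eqref{recu00z} telescope into $\desc(z,j)$ and $\asc(z,j)$ by \eqref{def_desc} and \eqref{def_asc}. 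For \eqref{hank1} and \eqref{hank2} the surplus powers of $[2]_q$ (and $[3]_q$) produced this way are absorbed into $\beta_1^{\,n}$, resp.\ $\beta_2^{\,n}$, leaving only the prefactors $1/[2]_q$ and $1/([2]_q[3]_q)$; no step here is delicate, it is all a matter of matching exponents.

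The new identity \eqref{hank3} is the only one that uses the shifted factorization \eqref{eqgeneral2}. Using once more that $(\beta_{n+2}/\beta_2)_{n\ge0}$ is the normalized moment sequence of the $c=d=1$ family, one has $\beta_{i+j+3}=\beta_2\,\mu_{i+j+1}$, hence $\det_{0\le i,j\le n-1}\beta_{i+j+3}=\beta_2^{\,n}\,q_n\prod_{k=1}^{n-1}b_k^{n-k}$ by \eqref{eqgeneral2}, with $q_n$ the sequence attached by \eqref{eq:qn} to the recurrence \eqref{recu11}. The product part $\beta_2^{\,n}\prod_{k=1}^{n-1}b_k^{n-k}$ is nothing but $\det_{0\le i,j\le n-1}\beta_{i+j+2}$, already evaluated in \eqref{hank2}; so the whole problem reduces to computing $q_n$, that is $q_n=(-1)^np_n(0)$ where $p_n$ is the monic $q$-Hahn polynomial with $c=d=1$ (equivalently, solving $q_0=1$, $q_1=-a_0$, $q_{n+1}=-a_nq_n-b_nq_{n-1}$ with the $a_n,b_n$ of \eqref{recu11}). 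This is the crux, and the source of the non-product factor. The claim is that for $n\ge1$
\[
  q_n=(-1)^n\Bigl(q^{\binom{n+2}{2}}+(-1)^n\Bigr)\,\frac{[2]_q}{[3]_q[4]_q}\,\prod_{i=1}^{n-1}\frac{[i+1]_q[i+2]_q^2}{[2i+3]_q[2i+4]_q},
\]
together with $q_0=1$. I would prove this by induction: dividing $q_{n+1}=-a_nq_n-b_nq_{n-1}$ by the displayed product and simplifying the rational coefficients with $1+q^m=[2m]_q/[m]_q$, the induction step reduces to the closed identity
\begin{align*}
  \Bigl(q^{\binom{n+3}{2}}+(-1)^{n+1}\Bigr)[n+2]_q[2n+2]_q
  &=(q-1)[n+1]_q[n+2]_q[2n+3]_q\Bigl(q^{\binom{n+2}{2}}+(-1)^n\Bigr)\\
  &\qquad+q^{n+1}[n+1]_q[2n+4]_q\Bigl(q^{\binom{n+1}{2}}+(-1)^{n-1}\Bigr),
\end{align*}
which follows by comparing, on the two sides, the coefficient of $q^{\binom{n+2}{2}}$ and the constant term: the former gives $q^{n+2}[n+2]_q[2n+2]_q$ on both sides (using $(q-1)[n+2]_q[2n+3]_q+[2n+4]_q=q^{n+2}[2n+3]_q+q^{2n+3}$ and that $[n+1]_q$ times the right-hand side is $q^{n+2}[n+2]_q[2n+2]_q$), and the latter gives $-[n+2]_q[2n+2]_q$ on both sides (using $(q-1)[n+2]_q[2n+3]_q-q^{n+1}[2n+4]_q=-(1+q^{n+1})[n+2]_q$). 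Substituting this $q_n$ into $\det_{0\le i,j\le n-1}\beta_{i+j+3}=q_n\det_{0\le i,j\le n-1}\beta_{i+j+2}$ and using \eqref{hank2} then gives \eqref{hank3}, the factorials matching up via identities of the form $[i+1]!_q^3[i+2]!_q^3=[i+1]!_q^6[i+2]_q^3$.

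The step I expect to be the real obstacle is this determination of $q_n$: guessing the closed form from small cases is easy, but it is not a priori clear that the value at $0$ of the monic $q$-Hahn polynomial with $c=d=1$ must carry the factor $q^{\binom{n+2}{2}}+(-1)^n$, which degenerates at $q=1$ into $1+(-1)^n$ and hence vanishes for odd $n$ — this is exactly what makes \eqref{hank3} a formula with no classical analogue. Everything else is bookkeeping with $q$-integers.
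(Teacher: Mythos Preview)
Your proposal is correct. For \eqref{hank0}, \eqref{hank1}, \eqref{hank2} and \eqref{hankz} you do exactly what the paper does: apply \eqref{eqgeneral1} to the recurrences of Section~\ref{recurs}, adjusting by $\beta_1^{\,n}$ or $\beta_2^{\,n}$ for the normalization. The only substantive difference is in \eqref{hank3}. Both you and the paper reduce to computing $q_n=(-1)^np_n(0)$ for the monic $c=d=1$ family and then use $d_n(3)=(-1)^np_n(0)\,d_n(2)$. But the paper evaluates $p_n(0)$ directly from the hypergeometric representation \eqref{hypergeo_cd}: setting $x=0$ turns the ${}_3\phi_2$ sum into a ${}_2\phi_1$, which after a one-line manipulation becomes $\pFq{2}{1}{q^{-n-1},q^{n+2}}{q}$ and is summed by $q$-Vandermonde to produce the factor $(-1)^n+q^{\binom{n+2}{2}}$. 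You instead guess the closed form for $q_n$ and verify it by induction against the recurrence \eqref{recu11}, reducing to the polynomial identity you display (which is indeed correct; your ``coefficient of $q^{\binom{n+2}{2}}$'' argument is really the decomposition $q^{\binom{n+k}{2}}=q^{\binom{n+1}{2}}\cdot q^{\cdots}$, separating the ``large'' power of $q$ from the $\pm1$ terms, and both pieces check out). Your route is more elementary --- it avoids $q$-Vandermonde and uses nothing beyond the recurrence coefficients --- but it requires having the answer in hand, whereas the paper's hypergeometric evaluation explains where the curious factor $q^{\binom{n+2}{2}}+(-1)^n$ comes from.
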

\begin{proof}
  Les formules \eqref{hank0}, \eqref{hank1}, \eqref{hank2} et
  \eqref{hankz} s'obtiennent directement en appliquant \eqref{eqgeneral1} aux
  quatre récurrences obtenues dans la section \ref{recurs}. Il faut
  tenir compte de la normalisation des moments pour \eqref{hank1} et
  \eqref{hank2}.

  On pose
  \begin{equation*}
    d_n(k)=\det_{0\leq i,j\leq n-1}\beta_{i+j+k}.
  \end{equation*}
  On utilise \eqref{eqgeneral2} pour montrer successivement les trois
  implications suivantes.
  \begin{itemize}
  \item[\eqref{hank0} $\Rightarrow$ \eqref{hank1}]
    Les polynômes orthogonaux unitaires associés aux $(\beta_n)_{n \geq 0}$ sont
    \begin{equation*}
      p_n(x)=\frac{(q;q)_n^2}{(q^{n+1}; q)_n}\frac{1}{q^{n}(q-1)^{n}}
      \sum_{k=0}^n\frac{(q^{-n},q^{n+1}; q)_k(q(1+(q-1)x);q)_kq^k}{(q,q,q;q)_k}.
    \end{equation*}
    Par conséquent, à l'aide de l'identité de $q$-Vandermonde,
    \begin{equation*}
      p_n(0)=\frac{(q;q)_n^2}{(q^{n+1}; q)_n} \frac{q^{\binom{n}{2}}}{(1-q)^n}=q^{\binom{n}{2}} \frac{[n]!_q^3 }{[2n]!_q}.
    \end{equation*}
    Il en résulte que $d_n(1)=d_n(0)(-1)^n p_n(0)$, ce qui donne une autre preuve de \eqref{hank1}.
  \item[ \eqref{hank1} $\Rightarrow$ \eqref{hank2}]
    Les polynômes orthogonaux unitaires associés aux $(\beta_{n+1}/\beta_1)_{n\geq 0}$ sont
    \begin{equation*}
      p_n(x)=\frac{(q,q^2;q)_n}{(q^{n+2}; q)_n}\frac{1}{q^{n}(q-1)^{n}}
      \sum_{k=0}^n\frac{(q^{-n},q^{n+2}; q)_k(q(1+(q-1)x);q)_kq^k}{(q,q,q^2;q)_k}.
    \end{equation*}
    Par conséquent, via $q$-Vandermonde, 
    \begin{equation*}
      p_n(0)=\frac{(q;q)_n^2}{(q^{n+2}; q)_n} \frac{q^{\binom{n+1}{2}}}{(1-q)^n}=q^{\binom{n+1}{2}} \frac{[n]!_q^2 [n+1]!_q}{[2n+1]!_q}.
    \end{equation*}
    Donc $d_n(2)=d_n(1)(-1)^n p_n(0)$, ce qui donne une autre preuve de \eqref{hank2}.
  \item[ \eqref{hank2} $\Rightarrow$ \eqref{hank3}]
    Les polynômes orthogonaux unitaires associés aux $(\beta_{n+2}/\beta_2)_{n\geq 0}$ sont
    \begin{equation*}
      p_n(x)=\frac{(q^2,q^2;q)_n}{(q^{n+3}; q)_n}\frac{1}{q^{n}(q-1)^{n}}
      \sum_{k=0}^n\frac{(q^{-n},q^{n+3}; q)_k(q(1+(q-1)x);q)_kq^k}{(q,q^2,q^2;q)_k}.
    \end{equation*}
    dont la valeur en $x=0$ est
    \begin{equation}
      p_n(0)=\frac{(q^2,q^2;q)_n}{(q^{n+3}; q)_n}\frac{1}{q^{n}(q-1)^{n}}
      \sum_{k=0}^n\frac{(q^{-n},q^{n+3}; q)_kq^k}{(q^2,q^2;q)_k}.
    \end{equation}
    La somme qui intervient peut se simplifier comme suit (par $q$-Vandermonde):
    \begin{align*}
      \sum_{k=0}^n\frac{(q^{-n},q^{n+3}; q)_kq^k}{(q^2,q^2;q)_k}&=
      \frac{(1-q)^2q^{-1}}{(1-q^{-n-1})(1-q^{n+2})}\left(-1+\pFq{2}{1}{q^{-n-1},q^{n+2}}{q}\right)\\
      &=(-1)^n\frac{q^n(1-q)^2}{(1-q^{n+1})(1-q^{n+2})}\left[(-1)^n +q^{\binom{n+2}{2}}\right].
    \end{align*}
    On obtient donc l'égalité
    \begin{equation*}
      p_n(0)=\frac{(q^2,q^2;q)_n}{(q^{n+1}; q)_{n+2}}\frac{1}{(1-q)^{n-2}}
      \left[(-1)^n +q^{\binom{n+2}{2}}\right]= \frac{[n]!_q [n+1]!_q^2}{[2n+2]!_q}\left[(-1)^n +q^{\binom{n+2}{2}}\right].
    \end{equation*}
    Il s'ensuit que $d_n(3)=d_n(2)(-1)^n p_n(0)$, ce qui donne \eqref{hank3}.
  \end{itemize}
\end{proof}

L'expression \eqref{hank3} est à rapprocher de la fraction continue
simple pour la série $\widehat{B}_2$ obtenue dans la section
\ref{autres_f}, qui fait intervenir des facteurs similaires.

Pour les décalages $D \geq 4$, le nombre $\beta_{D}$ lui-même a des
racines en dehors du cercle unité, donc il est impossible que les
déterminants de Hankel soient encore des produits de polynômes
cyclotomiques.


\section{Autres fractions continues}
\label{autres_f}

On obtient dans cette section d'autres fractions continues pour les
mêmes séries génératrices. Les fractions continues du type donné par
\eqref{Jfraction} sont traditionnellement nommées des J-fractions
continues ou fractions continues de Jacobi. On les transforme ici en des
fractions continues de Stieltjes ou S-fractions continues.

On a besoin du lemme de transformation suivant (voir \cite[Lemma I]{rogers}, \cite[Lemme 5.3]{chen_kw} et \cite{dumont}), qui permet de relier S-fractions continues et J-fractions continues.
\begin{lemma}
  \label{transfo_fc}
  On a l'égalité entre les deux développements en fractions continues
  \begin{equation}
    \frac{1}{1+}\,\frac{c_1 x}{1+}\,\frac{c_2 x}{1+}\,\frac{c_3 x}{1+}\dots
  =
    \frac{1}{1+c_1 x-}\,\frac{c_1 c_2 x^2}{1+(c_2+c_3)x -}\,
    \frac{c_3 c_4 x^2}{1+(c_4+c_5)x -}\dots
  \end{equation}
\end{lemma}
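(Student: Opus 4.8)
The statement is the classical ``even contraction'' (or ``even part'') identity for continued fractions, and the plan is to prove it by comparing convergents: I would show that the even-indexed convergents of the left-hand $S$-fraction coincide, term by term, with the convergents of the right-hand $J$-fraction. All continued fractions here are read as elements of the ring of formal power series in $x$ over $\QQ(c_1,c_2,\dots)$: each finite truncation is a rational fraction whose denominator is a unit (it equals $1+O(x)$), and the truncations converge $x$-adically to the value of the continued fraction.

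First I would record the standard three-term recursion for the numerators and denominators of the convergents. Writing the left-hand side as $\cfrac{a_1}{1+\cfrac{a_2}{1+\cfrac{a_3}{1+\cdots}}}$ with $a_1=1$ and $a_{k+1}=c_k x$ for $k\geq 1$, the convergents $p_n/q_n$ are given by $p_n=p_{n-1}+a_n p_{n-2}$ and $q_n=q_{n-1}+a_n q_{n-2}$, with $p_{-1}=1$, $p_0=0$, $q_{-1}=0$, $q_0=1$; in particular $p_2=1$ and $q_2=1+c_1 x$. Since $p_n q_{n-1}-p_{n-1}q_n=\pm\, a_2 a_3\cdots a_n=\pm\, c_1 c_2\cdots c_{n-1}\,x^{\,n-1}$ for $n\geq 2$ and each $q_n=1+O(x)$, consecutive convergents differ by a series of order at least $n-1$ in $x$, so the sequence converges $x$-adically; the same reasoning applies to the right-hand side.

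The heart of the argument is the even contraction itself. Applying the recursion at indices $2n$, $2n-1$ and $2n-2$ and eliminating the odd-indexed terms — using $p_{2n}=p_{2n-1}+a_{2n}p_{2n-2}$, $p_{2n-1}=p_{2n-2}+a_{2n-1}p_{2n-3}$ and $p_{2n-3}=p_{2n-2}-a_{2n-2}p_{2n-4}$ — one obtains
\[
  p_{2n}=\bigl(1+a_{2n-1}+a_{2n}\bigr)\,p_{2n-2}-a_{2n-2}a_{2n-1}\,p_{2n-4},
\]
and identically for the $q$'s. Substituting $a_{2n-1}=c_{2n-2}x$, $a_{2n}=c_{2n-1}x$, $a_{2n-2}=c_{2n-3}x$ turns this into
\[
  r_n=\bigl(1+(c_{2n-2}+c_{2n-1})x\bigr)\,r_{n-1}-c_{2n-3}c_{2n-2}\,x^2\,r_{n-2},\qquad n\geq 2,
\]
for $r_n=p_{2n}$ and for $r_n=q_{2n}$. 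Together with the initial values $p_0=0$, $q_0=1$, $p_2=1$, $q_2=1+c_1 x$, this is exactly the recursion and initial data governing the numerators and denominators of the convergents of $\cfrac{1}{1+c_1 x-\cfrac{c_1 c_2 x^2}{1+(c_2+c_3)x-\cdots}}$. Hence $p_{2n}/q_{2n}$ is the $n$-th convergent of the right-hand side for every $n$, and letting $n\to\infty$ in the $x$-adic topology gives the desired equality of the two continued fractions.

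I expect the only genuine obstacle to be bookkeeping the index shifts in the even-contraction step, where it is easy to misalign the subscripts $c_{2n-3}$, $c_{2n-2}$, $c_{2n-1}$; the convergence remarks and the verification of the base cases are routine. As this identity and this contraction argument are entirely classical, an alternative is simply to invoke the references \cite{rogers}, \cite{chen_kw} and \cite{dumont} already cited.
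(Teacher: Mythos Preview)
Your argument is correct: this is precisely the classical even-contraction computation, and your bookkeeping of the indices and of the initial convergents $p_0,q_0,p_2,q_2$ is accurate. The paper itself does not prove this lemma at all; it merely states it with references to \cite{rogers}, \cite{chen_kw} and \cite{dumont}. So your write-up is strictly more than what the paper provides, and your closing remark --- that one may alternatively just invoke those references --- is exactly the route the paper takes.
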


\begin{theorem}
  On a le développement en fraction continue
  \begin{equation}
    \widehat{B}(x) =  \cfrac{1}{[1]_q
      + \cfrac{x}{\frac{q+1}{[1]_q}
        - \cfrac{x}{[3]_q
          + \cfrac{q[2]_q x}{\frac{q^2+1}{[2]_q}
            - \cfrac{[2]_q x}{[5]_q 
              + \cfrac{q^2[3]_q x}{\frac{q^3+1}{[3]_q}
                - \cfrac{[3]_q x}{\ddots
                }}}}}}}
  \end{equation}
\end{theorem}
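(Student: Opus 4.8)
Le plan est de partir de la fraction continue annoncée, de la ramener à la forme normale de Stieltjes par une transformation d'équivalence de fractions continues, puis d'utiliser le lemme~\ref{transfo_fc} pour reconnaître dans sa contraction la J-fraction \eqref{Jfraction} dont les coefficients sont ceux de la récurrence \eqref{recu00}. Comme le théorème~\ref{omnibus} appliqué à \eqref{recu00} (avec $\mu_n = \beta_n$, $\mu_0 = \beta_0 = 1$) montre que cette J-fraction représente $\widehat{B}(x) = \sum_{n\geq 0}\beta_n x^n$, on obtient l'énoncé.

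\emph{Mise sous forme de Stieltjes.} La fraction annoncée est de la forme $A_1/(B_1 + A_2/(B_2 + \cdots))$ avec $A_1 = 1$ et, pour $k\geq 1$, $A_{2k} = q^{k-1}[k]_q\,x$, $A_{2k+1} = -[k]_q\,x$, $B_{2k-1} = [2k-1]_q$ et $B_{2k} = (1+q^k)/[k]_q$. En normalisant chaque dénominateur partiel à $1$ par la transformation d'équivalence usuelle (qui remplace le $n$-ième numérateur partiel $A_n$ par $A_n/(B_{n-1}B_n)$, avec $B_0 = 1$), on récrit la fraction annoncée, puisque $[1]_q = 1$, sous la forme
\[ \cfrac{1}{1 + \cfrac{s_1 x}{1 + \cfrac{s_2 x}{1 + \cfrac{s_3 x}{1 + \ddots}}}}, \]
où $s_{2k-1} = q^{k-1}[k]_q^2/((1+q^k)[2k-1]_q)$ et $s_{2k} = -[k]_q^2/((1+q^k)[2k+1]_q)$ pour $k\geq 1$ ; en particulier $s_1 = 1/(1+q)$.

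\emph{Contraction et identification.} Par le lemme~\ref{transfo_fc} appliqué avec $c_i = s_i$, cette S-fraction est égale à la J-fraction \eqref{Jfraction} de coefficients $a_0 = s_1$, $b_n = s_{2n-1}s_{2n}$ et $a_n = s_{2n}+s_{2n+1}$ pour $n\geq 1$ ; il reste à vérifier qu'ils coïncident avec ceux de \eqref{recu00}. L'égalité $a_0 = s_1 = 1/(1+q)$ est immédiate. Pour $b_n$,
\[ s_{2n-1}s_{2n} = -\frac{q^{n-1}[n]_q^4}{(1+q^n)^2[2n-1]_q[2n+1]_q} = -\frac{q^{n-1}[n]_q^6}{[2n-1]_q[2n]_q^2[2n+1]_q}, \]
la seconde égalité résultant de $[2n]_q = (1+q^n)[n]_q$ ; le signe est compatible avec la convention $+p_{n-1}$ de \eqref{recu00}. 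Pour $a_n$, on multiplie l'égalité souhaitée par $(1+q^n)(1+q^{n+1})[2n+1]_q$ puis par $(q-1)^2$ ; en posant $Q = q^n$, elle se ramène à l'identité polynomiale
\[ -(Q-1)^2(1+qQ) + Q(qQ-1)^2(1+Q) = (qQ^2-1)(qQ^2+qQ-3Q+1), \]
dont les deux membres valent $q^2Q^4 + q^2Q^3 - 3qQ^3 - qQ + 3Q - 1$.

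\emph{Difficulté principale.} La seule chose à deviner, plutôt qu'à contrôler, est la forme close des $s_i$ : on l'obtient en déroulant les relations $c_1 = a_0$, $c_{2n} = b_n/c_{2n-1}$, $c_{2n+1} = a_n - c_{2n}$ imposées par le lemme~\ref{transfo_fc} pour de petites valeurs de $n$ et en reconnaissant le motif, l'étape d'identification ci-dessus fournissant alors le pas de récurrence qui le justifie. Le point le moins transparent est l'identité portant sur $a_n$, où les simplifications de $q$-entiers sont les plus subtiles ; tout le reste ne repose que sur $[1]_q = 1$ et $[2n]_q = (1+q^n)[n]_q$.
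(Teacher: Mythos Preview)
Your proof is correct and follows essentially the same route as the paper: normalise the displayed S-fraction to obtain the coefficients $s_i$ (which coincide with the paper's $c_i$), then apply the contraction lemma~\ref{transfo_fc} and check that the resulting $a_n$, $b_n$ match those of the récurrence~\eqref{recu00}. The paper leaves the verifications ``par un simple calcul''; you carry them out explicitly (notably the $a_n$ identity via the substitution $Q=q^n$), which is a welcome addition but not a different argument.
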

\begin{proof}
  En termes du lemme \ref{transfo_fc}, ce développement revient à montrer que
  \begin{equation}
    c_{2n-1} = \frac{q^{n-1}[n]_q^2}{(q^n+1)[2n-1]_q} ,\quad\text{et}\quad
    c_{2n}= -\frac{[n]_q^2}{(q^n+1)[2n+1]_q} ,
  \end{equation}
  pour $n \geq 1$. Il reste donc à vérifier par un simple calcul que
  \begin{equation}
    a_0  = c_1, \quad 
    a_n  = c_{2n} + c_{2n+1}, \quad 
    b_n  = c_{2n-1} c_{2n},  
  \end{equation}
  pour les coefficients $a_n$ et $b_n$ de la récurrence
  \eqref{recu00}.
\end{proof}

\begin{theorem}
  On a le développement en fraction continue
  \begin{equation}
    \widehat{B}_1(x) =  \cfrac{1}{1
      + \cfrac{\frac{q}{[3]_q}x}{1
        - \cfrac{\frac{[2]_q^2}{[3]_q[4]_q}x}{1
          + \cfrac{\frac{q^2 [2]^2_q[3]_q}{[4]_q[5]_q} x}{\ddots
          }}}}
  \end{equation}
  dont les coefficients alternent entre $
    \frac{q^n[n]_q^2[n+1]_q}{[2n]_q[2n+1]_q}$ et
$    \frac{-[n]_q[n+1]_q^2}{[2n+1]_q[2n+2]_q}$.
\end{theorem}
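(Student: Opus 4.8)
The plan is to apply the transformation lemma (Lemme \ref{transfo_fc}) exactly as in the proof of the previous theorem, using the recurrence \eqref{recu01} for the case $(c,d)=(0,1)$. First I would observe that, because $\widehat{B}_1$ is the generating series of the \emph{normalised} moments $\beta_{n+1}/\beta_1$, Théorème \ref{omnibus} applies directly: the J-fraction \eqref{Jfraction} for $\widehat{B}_1(x)$ has coefficients $a_n$ and $b_n$ equal to those of \eqref{recu01}, namely $b_n = \dfrac{q^n [n]_q^3[n+1]_q^3}{[2n]_q[2n+1]_q^2[2n+2]_q}$ and $a_n = (A_n+C_n-1+q)/(q(q-1))$ with $A_n,C_n$ the $(c,d)=(0,1)$ values given before \eqref{recu01}.

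Next, following the recipe of Lemme \ref{transfo_fc}, I would \emph{guess} the S-fraction coefficients to be the two interleaved families announced in the statement, i.e. set
\[
  c_{2n-1} = \frac{q^n[n]_q^2[n+1]_q}{[2n]_q[2n+1]_q}, \qquad
  c_{2n} = -\frac{[n]_q[n+1]_q^2}{[2n+1]_q[2n+2]_q},
\]
for $n \geq 1$ (note $c_1 = q/[3]_q$, matching the displayed fraction since $[1]_q=1$ and $[2]_q=1+q$). Then it remains to verify the three identities produced by the lemma:
\[
  a_0 = c_1, \qquad a_n = c_{2n}+c_{2n+1}, \qquad b_n = c_{2n-1}c_{2n}.
\]
The factorisation $b_n = c_{2n-1}c_{2n}$ is immediate: multiplying the two guessed expressions gives $-\dfrac{q^n[n]_q^3[n+1]_q^3}{[2n]_q[2n+1]_q^2[2n+2]_q}$ up to sign, and one checks the signs agree (the $b_n$ in \eqref{recu01} appears with a $+$ sign in front of $p_{n-1}$, hence equals $-\,$(the product), consistent with $c_{2n}<0$). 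The identity $a_0=c_1$ is a one-line check from the $(c,d)=(0,1)$ formula for $A_0,C_0$.

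The main obstacle is the middle identity $a_n = c_{2n}+c_{2n+1}$, which unpacks to showing
\[
  \frac{A_n+C_n-1+q}{q(q-1)} = -\frac{[n]_q[n+1]_q^2}{[2n+1]_q[2n+2]_q} + \frac{q^{n+1}[n+1]_q^2[n+2]_q}{[2n+2]_q[2n+3]_q}.
\]
This is a rational-function identity in $q$; the plan is to clear denominators — the common denominator on the left involves $(1-q^{2n+2})(1-q^{2n+3})$ after simplification, matching $[2n+1]_q[2n+2]_q[2n+3]_q$-type factors on the right once one uses $1-q^{2n+2}=(1-q^{n+1})(1+q^{n+1})$ — and check the resulting polynomial identity. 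Concretely I would substitute $A_n,C_n$, reduce everything to the variable $q$ with the shorthand $[m]=[m]_q$, and verify the numerators coincide; alternatively, since everything is a finite rational identity, one may simply note it can be checked by a computer algebra system, or by specialising: both sides are rational in $q$, so agreement for all $q$ follows from agreement after cross-multiplication, a polynomial identity of bounded degree. I expect this to be the only genuinely computational step, and it is entirely routine; no conceptual difficulty arises, exactly parallel to the preceding theorem's proof.
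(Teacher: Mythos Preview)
Your proposal is correct and follows exactly the same route as the paper: apply Lemme~\ref{transfo_fc} to the J-fraction coming from the recurrence~\eqref{recu01}, guess the $c_{2n-1},c_{2n}$ as stated, and reduce everything to checking $a_0=c_1$, $a_n=c_{2n}+c_{2n+1}$, $b_n=c_{2n-1}c_{2n}$ by direct computation. Your handling of the sign (the $+$ in front of $p_{n-1}$ in~\eqref{recu01} making $b_n$ negative, matching $c_{2n-1}c_{2n}<0$) is also correct.
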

\begin{proof}
  En termes du lemme \ref{transfo_fc}, ce développement revient à montrer que
  \begin{equation}
    c_{2n -1} =  \frac{q^n[n]_q^2[n+1]_q}{[2n]_q[2n+1]_q},\quad\text{et}\quad
    c_{2n} = -\frac{[n]_q[n+1]_q^2}{[2n+1]_q[2n+2]_q},
  \end{equation}
  pour $n \geq 1$. Il reste donc à vérifier par un simple calcul que
 \begin{equation}
    a_0  = c_1, \quad 
    a_n  = c_{2n} + c_{2n+1}, \quad 
    b_n  = c_{2n-1} c_{2n},  
  \end{equation}
  pour les coefficients $a_n$ et $b_n$ de la récurrence
  \eqref{recu01}.
\end{proof}

On peut en déduire facilement un développement du même type pour la fonction
$1/\widehat{B}$, en utilisant la relation $\widehat{B}=1+\beta_1 \widehat{B}_1$.

Une formule de ce type existe aussi pour la
série $\widehat{B}_2$ avec un décalage de deux crans. 

\begin{theorem}
  On a le développement en fraction continue
  \begin{equation}
    \widehat{B}_2(x) =  \cfrac{1}{1
      + \cfrac{c_1 x}{1
        + \cfrac{c_2 x}{1
          + \cfrac{c_3 x}{\ddots
          }}}}
  \end{equation}
  dont les coefficients alternent entre 
  \begin{equation}
    c_{2n-1} = \frac{ [n]_q [n + 1]_q^2}{[2n + 1]_q [2n + 2]_q}
    \frac{(q ^ {\binom{n + 2}{2}} + (-1) ^ {n + 2})}
    {(q ^ {\binom{n + 1}{2}} + (-1) ^ {n + 1})}
  \end{equation}
  et
  \begin{equation}
    c_{2n}=
    \frac{- q^{n + 1} [n + 1]_q^2 [n + 2]_q}{[2n + 2]_q [2n + 3]_q}
    \frac{(q^{\binom{n + 1}{2}} + (-1)^{n + 1})}
    {(q^{\binom{n + 2}{2}} + (-1) ^ {n + 2})},
  \end{equation}
  pour $n\geq 1$.
\end{theorem}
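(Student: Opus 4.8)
Le plan est de reprendre la méthode des deux théorèmes précédents. D'après le lemme \ref{transfo_fc}, le développement de Stieltjes annoncé pour $\widehat{B}_2(x)$ équivaut au développement de Jacobi dont les coefficients $a_n$ et $b_n$, au sens du théorème \ref{omnibus}, sont donnés par
  \begin{equation*}
    a_0 = c_1, \quad a_n = c_{2n} + c_{2n+1}, \quad b_n = c_{2n-1} c_{2n} \qquad (n \geq 1).
  \end{equation*}
  Or $\widehat{B}_2(x) = \sum_{n \geq 0}(\beta_{n+2}/\beta_2) x^n$ est la série génératrice des moments des polynômes orthogonaux $P_n$ de paramètres $c=d=1$ ; donc, par le théorème \ref{omnibus}, ces $a_n$ et $b_n$ coïncident avec les coefficients de la récurrence \eqref{recu11}, à savoir
  \begin{equation*}
    a_n = \frac{(q-1)[n+1]_q[n+2]_q}{(1+q^{n+1})(1+q^{n+2})}
    \quad\text{et}\quad
    b_n = -\frac{q^{n+1}[n]_q[n+1]_q^4[n+2]_q}{[2n+1]_q[2n+2]_q^2[2n+3]_q}.
  \end{equation*}
  Il suffit donc de vérifier les trois égalités ci-dessus à partir de ces formules et des expressions de l'énoncé pour les $c_n$.

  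L'égalité $b_n = c_{2n-1} c_{2n}$ est immédiate : dans le produit, le facteur de correction $(q^{\binom{n+2}{2}}+(-1)^{n+2})/(q^{\binom{n+1}{2}}+(-1)^{n+1})$ de $c_{2n-1}$ et son inverse, présent dans $c_{2n}$, se détruisent, et il ne reste que le produit des rapports de $q$-entiers, qui est exactement $b_n$. Le cas de base $a_0 = c_1$ se vérifie par substitution directe : avec $q^3-1=(q-1)[3]_q$ et $[4]_q=(q+1)(q^2+1)$, on trouve $c_1 = (q-1)/(q^2+1)$, qui est bien $a_0$.

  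La difficulté principale est l'égalité $a_n = c_{2n}+c_{2n+1}$, où les facteurs de correction ne se simplifient plus mutuellement. L'idée est d'utiliser $\binom{n+3}{2} = \binom{n+2}{2}+(n+2)$ et $\binom{n+1}{2} = \binom{n+2}{2}-(n+1)$ pour réécrire $q^{\binom{n+1}{2}}+(-1)^{n+1}$, $q^{\binom{n+2}{2}}+(-1)^{n+2}$ et $q^{\binom{n+3}{2}}+(-1)^{n+3}$ en fonction de $Q := q^{\binom{n+2}{2}}$ et du signe $\varepsilon := (-1)^{n}$. Après réduction au dénominateur commun $(Q+\varepsilon)\,[2n+2]_q[2n+3]_q[2n+4]_q$ et mise en facteur de $[n+1]_q[n+2]_q$, on se ramène à l'identité
  \begin{equation*}
    -q^{n+1}[n+1]_q[2n+4]_q\bigl(q^{\binom{n+1}{2}}+(-1)^{n+1}\bigr) + [n+2]_q[2n+2]_q\bigl(q^{\binom{n+3}{2}}+(-1)^{n+3}\bigr) = (q-1)[n+1]_q[n+2]_q[2n+3]_q(Q+\varepsilon).
  \end{equation*}
  En regroupant le membre de gauche selon les puissances de $Q$ et en utilisant $[2m]_q=[m]_q(1+q^m)$ puis $q^m-1=(q-1)[m]_q$, on voit que le coefficient de $Q$ et celui de $\varepsilon$ valent tous deux $(q-1)[n+1]_q[n+2]_q[2n+3]_q$, ce qui établit l'identité. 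On en déduit alors $c_{2n}+c_{2n+1} = (q-1)[n+1]_q[n+2]_q/\bigl((1+q^{n+1})(1+q^{n+2})\bigr) = a_n$, en utilisant $(1+q^{n+1})(1+q^{n+2}) = [2n+2]_q[2n+4]_q/\bigl([n+1]_q[n+2]_q\bigr)$.
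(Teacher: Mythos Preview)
Your proof is correct and follows exactly the approach of the paper: reduce the S-fraction to the J-fraction via Lemma~\ref{transfo_fc} and check the three relations $a_0=c_1$, $a_n=c_{2n}+c_{2n+1}$, $b_n=c_{2n-1}c_{2n}$ against the coefficients of the récurrence~\eqref{recu11}. The paper dismisses these verifications as ``un simple calcul'', whereas you spell them out, in particular the nontrivial identity for $a_n=c_{2n}+c_{2n+1}$; your computation of the $Q$- and $\varepsilon$-coefficients is accurate.
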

\begin{proof}
  Il suffit de vérifier par un simple calcul que
  \begin{equation}
    a_0  = c_1 = \frac{q-1}{q^2+1}, \quad 
    a_n  = c_{2n} + c_{2n+1}, \quad 
    b_n  = c_{2n-1} c_{2n},  
  \end{equation}
  pour les coefficients $a_n$ et $b_n$ de la récurrence
  \eqref{recu11}.
\end{proof}

On observe qu'il apparaît des facteurs cyclotomiques d'ordre $n(n+1)$
ou $n(n+1)/2$. Cette expression est à rapprocher du comportement des
déterminants de Hankel pour le décalage de $3$, voir section
\ref{hankel}. Par ailleurs, les coefficients ont des pôles en $q=1$,
de sorte que ce développement n'a pas de version classique.

\begin{theorem}
  On a le développement en fraction continue
  \begin{equation}
    \widehat{B}_z(x) =  \cfrac{1}{[1]_q
      + \cfrac{([1]_q-z) x}{\frac{q+1}{[1]_q}
        - \cfrac{([1]+q z)x}{[3]_q
          + \cfrac{q([2]_q-z) x}{\frac{q^2+1}{[2]_q}
            - \cfrac{([2]_q+q^2 z) x}{[5]_q 
              + \cfrac{q^2([3]_q -z) x}{\frac{q^3+1}{[3]_q}
                - \cfrac{([3]_q +q^3 z)x}{\ddots
                }}}}}}}
  \end{equation}
\end{theorem}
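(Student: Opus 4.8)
The plan is to argue exactly as in the proofs of the previous three theorems of this section. By the theorem stating that the $\tfrac1z\int_0^z\beta_n(y)\,dy$ are the moments of the $q$-Legendre-type polynomials $P_n$, together with Theorem~\ref{omnibus}, the series $\widehat B_z(x)$ admits the J-fraction \eqref{Jfraction} whose coefficients $a_n$ and $b_n$ are read off from the recurrence \eqref{recu00z} for the monic version of these polynomials — keeping in mind that the coefficient displayed in front of $p_{n-1}$ in \eqref{recu00z} is $-b_n$, owing to the convention $p_{n+1}=(a_n+x)p_n-b_np_{n-1}$ of Theorem~\ref{omnibus}. It then remains to recognise the continued fraction in the statement as a rewriting of that J-fraction.

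First I would strip off an equivalence transformation, reducing the displayed fraction to an S-fraction $\frac{1}{1+}\,\frac{c_1x}{1+}\,\frac{c_2x}{1+}\dots$. Writing the displayed fraction as $\cfrac{1}{D_0+\cfrac{N_1}{D_1+\cfrac{N_2}{D_2+\cdots}}}$ with $D_0=[1]_q$, $D_{2n-1}=(q^n+1)/[n]_q$, $D_{2n}=[2n+1]_q$, $N_{2n-1}=q^{n-1}([n]_q-z)x$ and $N_{2n}=-([n]_q+q^nz)x$, and multiplying the $k$-th level by $r_0=1$ and $r_k=1/D_k$ for $k\geq1$, all partial denominators become $1$ and the partial numerators become $c_kx=r_{k-1}r_kN_k$. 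A short computation gives, for $n\geq1$,
\[
  c_{2n-1}=\frac{q^{n-1}[n]_q([n]_q-z)}{(q^n+1)[2n-1]_q},\qquad
  c_{2n}=-\frac{[n]_q([n]_q+q^nz)}{(q^n+1)[2n+1]_q}.
\]
As a sanity check, at $z=0$ these are precisely the coefficients appearing in the first theorem of this section, which is indeed the $z=0$ case of the present one.

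By Lemma~\ref{transfo_fc} it then suffices to verify the identities $a_0=c_1$, $a_n=c_{2n}+c_{2n+1}$ and $b_n=c_{2n-1}c_{2n}$ for $n\geq1$, with $a_n$ and $b_n$ as in \eqref{recu00z}. The equality $a_0=c_1$ is simply $\tfrac{1-z}{q+1}=\tfrac{1-z}{q+1}$. For $b_n$, cancelling the common factor $q^{n-1}[n]_q^2([n]_q-z)([n]_q+q^nz)$ from both sides reduces the identity to $[2n]_q^2=[n]_q^2(q^n+1)^2$, i.e.\ to $[2n]_q=[n]_q(q^n+1)$, which is clear. The substantive point is the middle identity: one places $c_{2n}$ and $c_{2n+1}$ over the common denominator $[2n+1]_q(q^n+1)(q^{n+1}+1)$ and must check that the numerator equals $[2n+1]_q\bigl([2n+1]_q+[n+1]_q-3[n]_q-2q^nz\bigr)$. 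Using $[2n+1]_q=[n]_q+q^n[n+1]_q=[n+1]_q+q^{n+1}[n]_q$ and $[n+1]_q=1+q[n]_q$ (or simply substituting $t=q^n$ and clearing denominators), this is a routine polynomial identity in $q$, affine in $z$. I expect this verification to be the only real computational obstacle, exactly as the $z$-free case was in the first theorem of this section.
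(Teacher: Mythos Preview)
Your proposal is correct and follows exactly the paper's approach: extract the S-fraction coefficients $c_k$ from the displayed continued fraction via an equivalence transformation, then use Lemma~\ref{transfo_fc} to reduce to checking $a_0=c_1$, $a_n=c_{2n}+c_{2n+1}$, $b_n=c_{2n-1}c_{2n}$ against the recurrence~\eqref{recu00z}. You simply give more computational detail (the explicit equivalence transformation and the reductions for $a_0$ and $b_n$) where the paper writes ``un simple calcul''.
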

\begin{proof}
  En termes du lemme \ref{transfo_fc}, ce développement revient à montrer que
  \begin{equation}
    c_{2n-1} = \frac{q^{n-1}[n]_q([n]_q-z)}{(q^n+1)[2n-1]_q} ,\quad\text{et}\quad
    c_{2n}= -\frac{[n]_q([n_q+q^n z)}{(q^n+1)[2n+1]_q} ,
  \end{equation}
  pour $n \geq 1$. Il reste donc à vérifier par un simple calcul que
  \begin{equation}
    a_0  = c_1, \quad 
    a_n  = c_{2n} + c_{2n+1}, \quad 
    b_n  = c_{2n-1} c_{2n},  
  \end{equation}
  pour les coefficients $a_n$ et $b_n$ de la récurrence
  \eqref{recu00z}.
\end{proof}










\section{Théorème général de type Fulmek-Krattenthaler}

On considère les polynômes en $x$ définis par la formule
\begin{equation}
  \label{hypergeo_abcd}
  P_n(x) = \pFq{3}{2}{q^{-n}, q^{n + a + b + c + d - 1},
    q^a (1 + (q - 1) x)}{q^{a + c}, q^{a + d}}
\end{equation}
où ${}_3\phi_{2}$ est la fonction hypergéométrique basique
usuelle. Les paramètres $c$ et $d$ sont des entiers positifs ou
nuls. Les paramètres $a$ et $b$ sont des entiers strictement positifs.

On retrouve les polynômes de type $q$-Hahn considérés précédemment lorsque $a=b=1$.

Ces polynômes sont l'évaluation en $q^a (1 + (q - 1) x)$ de polynômes
$Q_n$ de type ``grand $q$-Jacobi'' pour les paramètres $(q^{a+c-1},
q^{b+d-1}, q^{a+d-1})$. Ce sont encore des polynômes orthogonaux (voir
le lemme \ref{affine}).

On pose $C_{a,b,c,d} = \Psi\left(x^{2} \asc(x, a-1) \asc(x, b-1)
  \desc(x, c-1) \desc(x, d-1)\right)$.

\begin{theorem}
  Les nombres 
  \begin{equation}
    \Psi\left(x^{n+2} \asc(x, a-1) \asc(x, b-1) \desc(x, c-1) \desc(x, d-1)\right) / C_{a,b,c,d}
  \end{equation}
  sont les
  moments des polynômes orthogonaux $P_n$ de paramètres $a,b,c,d$.
\end{theorem}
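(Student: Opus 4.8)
The plan is to imitate the proofs of the three $q$-Hahn theorems in Section 2: since the family $P_n$ is orthogonal (as an affine reparametrisation of big $q$-Jacobi polynomials), its moment functional is characterised by the two conditions that it vanishes on $P_n$ for $n\geq 1$ and is normalised to $1$ at $n=0$. So it suffices to produce a linear form that does this, and our candidate is $f \mapsto \Psi\bigl(x^{2}\asc(x,a-1)\asc(x,b-1)\desc(x,c-1)\desc(x,d-1)\, f\bigr)$, suitably normalised by $C_{a,b,c,d}$. Concretely I would first expand \eqref{hypergeo_abcd} as a terminating ${}_3\phi_2$ sum in powers of $q^a(1+(q-1)x)$, i.e.
\begin{equation*}
  P_n(x) = \sum_{k=0}^{n} \frac{(q^{-n}, q^{n+a+b+c+d-1}; q)_k\,(q^a(1+(q-1)x); q)_k\, q^k}{(q, q^{a+c}, q^{a+d}; q)_k}.
\end{equation*}

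Next I would identify the key polynomial factor. The Pochhammer symbol $(q^a(1+(q-1)x);q)_k$ equals, up to an explicit power of $(1-q)$ and a $q$-factorial-type constant, the product $([a]_q+q^a x)([a+1]_q+q^{a+1}x)\cdots([a+k-1]_q+q^{a+k-1}x)$, which is exactly $\asc(x,a+k-1)/\asc(x,a-1)$ times a constant. The point of multiplying by $x^{2}\asc(x,a-1)\asc(x,b-1)\desc(x,c-1)\desc(x,d-1)$ inside $\Psi$ is that it converts each summand into (a constant times) an expression of the shape $\qbase{i}{x}{d}_q \qbase{j}{x}{e}_q$ to which Lemma \ref{alpha} (or Lemma \ref{beta} in degenerate cases) applies directly: the $\asc$ factor from the Pochhammer symbol merges with $\asc(x,a-1)$ and $\asc(x,b-1)$ to give ascending products of the right length, while the $\desc$ factors and the $x^2$ play the role of the second $\qbase{}{}{}$-block. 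So I would carry out this bookkeeping carefully, keeping track of the shift parameters $i,d,j,e$ as functions of $a,b,c,d,k$, and then apply Lemma \ref{alpha} to get a closed form for each term. The resulting prefactor (powers of $q$ from the $-\binom{\cdot}{2}$ exponents, the $q$-binomial in the denominator, etc.) should combine with the hypergeometric coefficients to collapse the whole sum into a single ${}_2\phi_1$.

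The last step is to recognise that collapsed sum as a balanced (Saalschützian) or $q$-Vandermonde-summable ${}_2\phi_1$ of the form ${}_2\phi_1(q^{-n}, q^{n+a+b+c+d-1}; q^{m}; q,q)$ for an appropriate integer shift $m$ coming from the $[k+1]_q$-type denominators produced by Lemma \ref{alpha}; by the $q$-Vandermonde identity this vanishes precisely when $n\geq 1$, and for $n=0$ every sum is a single term and the normalisation by $C_{a,b,c,d}=\Psi(x^2\asc(x,a-1)\asc(x,b-1)\desc(x,c-1)\desc(x,d-1))$ makes the $n=0$ moment equal to $1$ tautologically. The main obstacle is the middle step: getting the algebra of shift indices right so that the product of two $\asc$-blocks and two $\desc$-blocks, once the $(q^a(1+(q-1)x);q)_k$ factor is absorbed, is genuinely in the scope of Lemma \ref{alpha} — in particular checking the inequalities $0\leq i\leq d$ and $0\leq j\leq e$ hold for all $0\leq k\leq n$, and that the power-of-$q$ and $q$-binomial prefactors telescope correctly against $(q, q^{a+c}, q^{a+d}; q)_k$ to leave a clean hypergeometric term. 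Once that is done, the $q$-Vandermonde cancellation and the normalisation are routine, exactly as in the three special cases already treated.
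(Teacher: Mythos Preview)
Your proposal is correct and follows exactly the paper's approach: expand $P_n$ hypergeometrically, multiply by the weight, rewrite each summand as a product of two $\qbase{\cdot}{x}{\cdot}_q$ factors, apply Lemma~\ref{alpha}, and collapse the result to $\pFq{2}{1}{q^{-n},\,q^{n+a+b+c+d-1}}{q^{a+b+c+d}}$, which vanishes by $q$-Vandermonde for $n\geq 1$. One small correction to your bookkeeping sketch: the right grouping pairs each $\asc$ with one $\desc$ and one factor of $x$ --- namely $\desc(x,c-1)\cdot x\cdot \asc(x,a-1)\cdot(q^a(1+(q-1)x);q)_k$ becomes (a constant times) $\qbase{a+k-1}{x}{a+k+c-1}_q$ and $\desc(x,d-1)\cdot x\cdot\asc(x,b-1)$ becomes $\qbase{b-1}{x}{b+d-1}_q$ --- rather than merging the two $\asc$ blocks together.
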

Ce résultat est un $q$-analogue du théorème 23 de \cite{fulmek-kratt}. On peut noter la remarquable symétrie par échange de $a$ et $b$ ou de $c$ et $d$, qui n'est pas immédiatement visible dans \eqref{hypergeo_abcd}.

\begin{proof}
  Il suffit à nouveau de montrer que la forme linéaire
  \begin{equation}
    f \mapsto
    \Psi\left(x^2 \asc(x, a-1) \asc(x, b-1) \desc(x, c-1) \desc(x, d-1) f\right)
  \end{equation}
  s'annule sur les polynômes $P_n$ lorsque $n\geq 1$.

  L'expression hypergéometrique \eqref{hypergeo_abcd} donne la formule explicite
  \begin{equation}
    P_n(x) = \sum_{k=0}^{n} \frac{(q^{-n}, q^{n+a+b+c+d-1} ; q)_k (q^a (1 + (q - 1) x);q)_k q^k}{(q,q^{a+c},q^{a+d};q)_k}.
  \end{equation}

  En considérant l'expression
  \begin{equation}
    \label{xxabcd}
    x^2  \asc(x, a-1) \asc(x, b-1) \desc(x, c-1) \desc(x, d-1) (q^a (1 + (q - 1) x);q)_k,
\end{equation}
 on observe qu'on peut associer ensemble les facteurs pour former d'une part
  \begin{equation}
    \desc(x, c-1) x \asc(x, a-1) (q^a (1 + (q - 1) x);q)_k = (1-q)^k (-1)^{c-1} q^{\binom{c}{2}} [a+k-1+c]!_q \qbase{a+k-1}{x}{a+k-1+c}_q
  \end{equation}
  et d'autre part
  \begin{equation}
    \desc(x, d-1) x \asc(x, b-1) = (-1)^{d-1} q^{\binom{d}{2}} [b+d-1]!_q \qbase{b-1}{x}{b+d-1}_q.
  \end{equation}
  Au complet, l'expression \eqref{xxabcd} vaut donc
  \begin{equation}
    (q-1)^k (-1)^{c+d} q^{\binom{c}{2}+\binom{d}{2}} [a+k-1+c]!_q [b+d-1]!_q \qbase{a+k-1}{x}{a+k-1+c}_q \qbase{b-1}{x}{b+d-1}_q.
  \end{equation}

  Le lemme \ref{alpha} donne que
  \begin{equation}
    \label{eval_psi_gen}
    \Psi(\qbase{a+k-1}{x}{a+k-1+c}_q \qbase{b-1}{x}{b+d-1}_q)= \frac{(-1)^{c+d}q^{-\binom{c}{2}+c d-\binom{d}{2}}}{[a+b+c+d+k-1]_q \qbinom{a+b+c+d+k-2}{b+c-1}_q}.
  \end{equation}
  On en déduit alors que
  \begin{multline}
    \Psi(x^2  \asc(x, a-1) \asc(x, b-1) \desc(x, c-1) \desc(x, d-1) P_n(x)) = \\
 q^{cd} [b+d-1]!_q[b+c-1]!_q \sum_{k=0}^{n} \frac{(q^{-n}, q^{n+a+b+c+d-1} ; q)_k q^k (1-q)^k  [a+c+k-1]!_q[a+d+k-1]!_q}{(q,q^{a+c},q^{a+d};q)_k [a+b+c+d+k-1]!_q} \\=
 q^{cd} \frac{[b+d-1]!_q[b+c-1]!_q [a+c -1]!_q [a+d -1]!_q}{[a+b+c+d-1]!_q}\sum_{k=0}^{n} \frac{(q^{-n}, q^{n+a+b+c+d-1} ; q)_k q^k (q^{a+c}; q)_k (q^{a+d}; q)_k}{(q,q^{a+c},q^{a+d};q)_k (q^{a+b+c+d};q)_k} .
  \end{multline}
  On reconnaît la somme comme $\pFq{2}{1}{q^{-n}, q^{n + a+b+c+d -1 }}{q^{a+b+c+d}}$, qui
  est nul par la formule de $q$-Vandermonde lorsque $n\geq 1$.
\end{proof}

Comme sous-produit de cette preuve, on obtient une expression pour la
constante de normalisation $C_{a,b,c,d}$ :
\begin{equation}
   C_{a,b,c,d} = q^{cd} \frac{[b+d-1]!_q[b+c-1]!_q [a+c -1]!_q [a+d -1]!_q}{[a+b+c+d-1]!_q}.
\end{equation}

On peut alors en déduire un énoncé sur la factorisation des
déterminants de Hankel.

Selon \cite[\S 14.5]{koekoek_book}, la version unitaire $q_n$ des polynômes
$Q_n$ définis par la formule \eqref{hypergeo_abcd} (avec $x$ à la place de $q^a(1 + (q - 1) x)$) vérifie la récurrence
\begin{equation}
  q_{n+1} = (A_n + C_n - 1 + x) q_n - A_{n-1} C_n q_{n-1},
\end{equation}
où
\begin{equation}
  A_n = \frac{(1-q^{n+a+d})(1-q^{n+a+c})(1-q^{n+a+b+c+d-1})}{(1-q^{2n+a+b+c+d-1})(1-q^{2n+a+b+c+d})}
\end{equation}
et
\begin{equation}
  C_n = -\frac{q^{n+2a+c+d-1}(1-q^n)(1-q^{n+b+d-1})(1-q^{n+b+c-1})}{(1-q^{2n+a+b+c+d-2})(1-q^{2n+a+b+c+d-1})}.
\end{equation}

On utilise ensuite le lemme \ref{affine} pour le changement de
variables $x \mapsto q^a(1+(q-1)x)$. Dans la récurrence obtenue pour
les versions unitaires $p_n$ des polynômes $P_n$ définis par
\eqref{hypergeo_abcd}, les coefficients $b_n$ sont donc
\begin{equation}
   - q^{n+c+d-1}  \frac{[n]_q [a + c + n - 1]_q [b + c + n - 1]_q
               [a + d + n - 1]_q [b + d + n - 1]_q
               [a + b + c + d + n - 2]_q }{
               [a + b + c + d + 2 n - 3]_q
                [a + b + c + d + 2 n - 2]_q ^ 2
                [a + b + c + d + 2 n - 1]_q }.
\end{equation}

Soit $M(n)$ la matrice de terme général
\begin{equation}
  \Psi(x^{i + j + 2} \asc(x, a-1) \asc(x, b-1) \desc(x, c-1) \desc(x, d-1))
\end{equation}
pour $0 \leq i,j \leq n-1$.

On déduit donc du théorème \ref{omnibus} le résultat suivant.
\begin{theorem}
  Le déterminant de $M(n)$ est 
  \begin{multline}
    (-q^{c+d})^{\binom{n}{2}} q^{\binom{n}{3}} C_{a,b,c,d} ^ n \times \\
     \prod_{i=1}^{n-1} \left( \frac{[i]_q [a + c + i - 1]_q [b + c + i - 1]_q
               [a + d + i - 1]_q [b + d + i - 1]_q
               [a + b + c + d + i - 2]_q }{
               ([a + b + c + d + 2 i - 3]_q
                [a + b + c + d + 2 i - 2]_q ^ 2
                [a + b + c + d + 2 i - 1]_q) }\right)^ {n - i}.
  \end{multline}
\end{theorem}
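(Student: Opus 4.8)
Le plan est d'appliquer le théorème-omnibus \ref{omnibus} à la famille $P_n$ de paramètres $a,b,c,d$, dont on vient d'identifier les moments et de calculer explicitement les coefficients $b_n$ de la récurrence à trois termes.

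D'abord, on note que $M(n)$ a pour terme général $C_{a,b,c,d}\,\mu_{i+j}$, où $\mu_n$ désigne le moment normalisé ($\mu_0 = 1$) du théorème précédent. On a donc $\det M(n) = C_{a,b,c,d}^n \det_{0\leq i,j\leq n-1}\mu_{i+j}$, et la formule \eqref{eqgeneral1} donne $\det_{0\leq i,j\leq n-1}\mu_{i+j} = \prod_{k=1}^{n-1} b_k^{n-k}$, les $b_k$ étant ceux explicités ci-dessus.

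Il reste à mettre en forme le produit $\prod_{k=1}^{n-1} b_k^{n-k}$. On écrit $b_k = -q^{k+c+d-1} r_k$, où $r_k$ est la fraction rationnelle en $q$-entiers figurant dans l'énoncé (de sorte que $r_i$ est le facteur élevé à la puissance $n-i$ dans le produit final). Le signe total vaut $(-1)^{\sum_{k=1}^{n-1}(n-k)} = (-1)^{\binom{n}{2}}$, et l'exposant de $q$ vaut $\sum_{k=1}^{n-1}(k+c+d-1)(n-k)$, qu'on scinde en $(c+d)\binom{n}{2}$ et $\sum_{k=1}^{n-1}(k-1)(n-k) = \binom{n}{3}$ (identité élémentaire : le changement d'indice $j = k-1$ ramène à $\sum_{j\geq 0} j(n-1-j) = \binom{n}{3}$). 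On obtient ainsi le préfacteur $(-q^{c+d})^{\binom{n}{2}} q^{\binom{n}{3}}$, tandis que $\prod_{k=1}^{n-1} r_k^{n-k}$ est exactement le produit annoncé ; en multipliant enfin par $C_{a,b,c,d}^n$, on conclut.

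Il n'y a pas d'obstacle réel : le calcul est entièrement mécanique une fois le théorème \ref{omnibus} invoqué, et le seul point demandant un peu d'attention est de ne pas oublier le facteur de normalisation $C_{a,b,c,d}^n$ et d'extraire correctement le signe et la puissance de $q$ à partir de $\prod_{k=1}^{n-1} b_k^{n-k}$.
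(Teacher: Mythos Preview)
Your proof is correct and follows exactly the paper's approach: the paper simply states that the result follows from the omnibus theorem \ref{omnibus} applied to the moments and the $b_n$ just computed, while you spell out the bookkeeping (the normalization factor $C_{a,b,c,d}^n$ and the extraction of the sign and the power of $q$ from $\prod_{k=1}^{n-1} b_k^{n-k}$) that the paper leaves implicit.
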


On retrouve les déterminants de Hankel décalés des $\beta_n$ pour
$a=b=1$ et $(c,d)=(0,0),(0,1),(1,1)$ (formules \eqref{hank0}, \eqref{hank1} et \eqref{hank2}).  






\bibliographystyle{alpha}
\bibliography{hankel_bernoulli}

{Fr\'ed\'eric Chapoton} \\
{Institut Camille Jordan, CNRS UMR 5208, Université Claude Bernard Lyon 1,
 Université de Lyon,
    F-69622 Villeurbanne cedex, France}            \\
{chapoton@math.univ-lyon1.fr}

{Jiang Zeng} \\
{Institut Camille Jordan, CNRS UMR 5208, Université Claude Bernard Lyon 1,
 Université de Lyon, 
    F-69622 Villeurbanne cedex, France}            \\         
{zeng@math.univ-lyon1.fr}

\end{document}